\documentclass[a4paper]{amsart}
\usepackage{amsmath}
\usepackage{tikz}
\usepackage{skull}
 
\definecolor{opblue}{RGB}{58,110,225}
\definecolor{opred}{RGB}{210,55,60}
\newcommand{\corolla}[3]{%
  \tikz[baseline={(0,-0.16)},scale=0.42,line width=1.15pt,line cap=round]{%
    \pgfmathsetmacro{\hw}{0.30+0.08*(#1-2)}%
    \ifnum#1=1
      \draw[#2] (0,0) -- (0,0.62);%
    \else
      \foreach \i in {1,...,#1}{%
        \pgfmathsetmacro{\x}{-\hw+2*\hw*(\i-1)/(#1-1)}%
        \draw[#2] (0,0) -- (\x,0.62);}%
    \fi
    \draw[#3] (0,0) -- (0,-0.5);}}
\newcommand{\corollagen}[3]{%
  \tikz[baseline={(0,-0.16)},scale=0.42,line width=1.15pt,line cap=round]{%
    \draw[#2] (0,0) -- (-0.55,0.62);
    \draw[#2] (0,0) -- (0.55,0.62);
    \fill[#2] (-0.16,0.5) circle (0.045) (0,0.5) circle (0.045) (0.16,0.5) circle (0.045);
    \draw[#3] (0,0) -- (0,-0.5);
    \node[above,inner sep=1pt] at (0,0.62) {\scriptsize$#1$};}}

\usepackage{tikz}
\usepackage{tikz-cd}
\usetikzlibrary{arrows}
\tikzcdset{arrow style=tikz,
diagrams={>={Stealth[round,length=4pt,width=6pt,inset=3pt]}}}
\usepackage{graphicx}
\usepackage[all,cmtip]{xy}

\makeatletter
\newcommand*{\tightdisplaymath}{\abovedisplayskip\z@\belowdisplayskip\z@}

\usepackage{calligra}

\usepackage[colorlinks,allcolors=red]{hyperref}

\makeatletter
\newcommand\RSloop{\@ifnextchar\bgroup\RSloopa\RSloopb}
\makeatother
\newcommand\RSloopa[1]{\bgroup\RSloop#1\relax\egroup\RSloop}
\newcommand\RSloopb[1]
  {\ifx\relax#1
   \else
     \ifcsname RS:#1\endcsname
       \csname RS:#1\endcsname
     \else
       \GenericError{(RS)}{RS Error: operator #1 undefined}{}{}
     \fi
   \expandafter\RSloop
   \fi
  }

\newcommand{\A}{\mathscr{A}} 
\newcommand{\B}{\mathscr{B}} 
\newcommand{\Ff}{\mathscr{F}} 
\newcommand{\Gf}{\mathscr{G}} 
\newcommand{\Ain}{\mbox{A}_{\infty}} 
\newcommand{\Id}{\mbox{Id}}
\newcommand{\T}{\mathfrak{t}}

\newcommand{\Fun}{\mbox{Fun}}
\newcommand\restr[2]{{
  \left.\kern-\nulldelimiterspace 
  #1 
  \littletaller 
   \right|_{#2} 
  }}

\usepackage{graphicx}
\usepackage{amsfonts}
\usepackage{amsmath}
\usepackage{amsthm}
\usepackage{amssymb}
\usepackage[all]{xy}
\usepackage{hyperref}
\usepackage{aliascnt}
\usepackage{multicol}
\usepackage{enumitem}
\usepackage{comment}
\usepackage[mathscr]{euscript}

\theoremstyle{definition}
\newtheorem{defn}{Definition}[section]
\newtheorem{exmp}{Example}[section]
\newtheorem{rem}{Remark}[section]
\newtheorem{thm}{Theorem}[section]
\newtheorem{prop}{Proposition}[section]
\newtheorem{lemma}{Lemma}[section]
\newtheorem{cor}{Corollary}[section]
\newtheorem*{thmA}{Theorem A}
\newtheorem*{propB}{Proposition B}

\title{The (right) homotopy between two A$_{\infty}$-morphisms}

\author{Mattia Ornaghi}

\subjclass[2020]{18G70, 18M70, 18N40, 18N70}
\thanks{The author was supported by ERC Advanced Grant-101095900-TriCatApp.}
\date{\today}

\address{\parbox{0.9\textwidth}{Universit\`a degli Studi di Milano\\
Dipartimento di Matematica\\
Via Cesare Saldini 50, 20133 Milano, Italy}}

\email{mattia12.ornaghi@gmail.com}

\begin{document}
\maketitle

\begin{abstract}
Let $f$, $g\colon A\to B$ be two $\Ain$-morphisms between two $\Ain$-algebras (linear over a commutative ring $\mathbb{K}$).\ 
In this paper, we compare different notions of (right) homotopy between $f$ and $g$.\ 
We note that the $\Ain$-algebra $B\otimes R$, where $R:=\mathbb{K}[t,dt]$, is a model for the interval $[0,1]\times B$ in the sense of Fukaya-Oh-Ohta-Ono, and we give a short alternative proof, in the unfiltered case, of the existence of comparison morphisms between such models.\ 
We use it to prove that two $\Ain$-morphisms are homotopic \`a la Seidel if and only if, considered as morphisms of 2-colored DG operads, they are elementary homotopic \`a la Markl-Shnider-Stasheff through a homotopy which is constant on the two monochromatic parts.\ 
We also provide an example of two $\Ain$-morphisms which are elementary homotopic but not weakly equivalent (and so homotopic \`a la Seidel).\
Moreover, over a field, we observe that the models for the interval are good path objects in the category of $\Ain$-algebras with (non strict) $\Ain$-morphisms, and we explain why weakly equivalent $\Ain$-morphisms need not be homotopic.\
\end{abstract}

\section{Introduction and statement of the results}

$\Ain$-algebras and $\Ain$-categories, together with the corresponding notions of morphisms between them, appear in several areas of mathematics.\ 
They were introduced in algebraic topology by Stasheff \cite{Sta}, in the study of loop spaces and homotopy associative structures; 
in symplectic geometry they are the algebraic structures underlying Lagrangian Floer theory and Fukaya categories \cite{Fuk}, \cite{Fuk93}, \cite{FOOO1}, \cite{Sei}; 
in algebraic geometry they appear, for instance, in homological mirror symmetry \cite{Kon} and as (DG or $\Ain$)-enhancements of derived categories of coherent sheaves, \cite{Gen}, \cite{BLM}; see also \cite{KS3} for their role in noncommutative geometry.\ 
There are several equivalent ways to define them: by means of the multilinear operations $m^n$ (and $f^n$ for the morphisms) satisfying the $\Ain$-equations, see e.g.\ \cite{Kel}, \cite[(1.2)-(1.6)]{Sei}; as DG structures (codifferentials) on the bar construction \cite[\S2]{Orn1}; as algebras over the DG operad $\Ain$ and morphisms of (2-colored) DG operads \cite{MSS}, \cite{LoVa}, \cite{LAM}.\ 
In this note we assume that the reader is familiar with these notions, as well as with the various notions of unit in this setting (strict, cohomological and homotopy units), see e.g.\ \cite[\S5]{Fuk}, \cite[Definition 1.1.4-1.1.5]{Orn2}, \cite[\S2]{Sei}.

We fix a commutative ring $\mathbb{K}$ with unit $1_{\mathbb{K}}$; all our categories are assumed to be linear over $\mathbb{K}$ and small.
The basic question is the following: given two $\Ain$-algebras (or $\Ain$-categories) $A$ and $B$ and two $\Ain$-morphisms (or $\Ain$-functors) 
\begin{equation}
f,g\colon A\to B,
\end{equation}
what does it mean that $f$ and $g$ are \emph{homotopic}?\ 
Several answers coexist in the literature:
\begin{itemize}
\item[$\bullet$] $f\sim g$ if $f-g=\mathfrak{M}^1(T)$ for a prenatural transformation $T\colon f\to g$ of degree $0$ with $T^0=0$, cf. Seidel \cite[(1h)]{Sei};
\item[$\bullet$] $f\approx g$ if $f$ and $g$ are isomorphic in $H^0(\Fun(A,B))$, see \cite[Definition 1.26 (i)]{COS1};
\item[$\bullet$] $f\sim_{\mathfrak{C}_B}g$ if $f$ and $g$ are homotopic via a model $\mathfrak{C}_B$ for the interval $[0,1]\times B$, in the sense of Fukaya-Oh-Ohta-Ono \cite[\S4.2.1]{FOOO1};
\item[$\bullet$] $f\asymp g$ if $f$ and $g$, seen as morphisms of 2-colored DG operads, are elementary homotopic in the sense of Markl-Shnider-Stasheff \cite[Definition 3.121]{MSS}.\end{itemize}
It is well known that, if $B$ is strictly unital, $f\sim g$ implies $f\approx g$, while the converse is false (Example \ref{ex:fukaya}, due to Fukaya).\ 
Our aim is to compare these notions.

All of them are \emph{right homotopies}: one looks for a (kind of) ``{path object}'' $\text{Path}(B)$ such that $f$ and $g$ are homotopic if there is $\mathscr{H}$ fitting the diagram
\begin{equation}
\xymatrix{
&B\\
A\ar[r]^-{\mathscr{H}}\ar[ur]^{f}\ar[dr]_{g}&\text{Path}(B)\ar[u]_{p_0}\ar[d]^{p_1}\\
&B
}
\end{equation}
with $\text{Path}(B)=I\otimes B$ for $\sim$ (Section \ref{sec:homotopies}), $\text{Path}(B)=\mathfrak{C}_B$ in \cite{FOOO1}, and $\text{Path}(B)=P(B)\subseteq\Fun([1],B)$ for $\approx$, see Remark \ref{diag:approx}.\ 
If we consider the category of (strictly unital) $\Ain$-categories with (strictly unital) strict $\Ain$-functors, it has a model structure \cite{Orn4}.\ 
For $\Ain$-algebras and (non strict) $\Ain$-morphisms the situation is different, and it is the one relevant for the relation $\sim$.\ 
The category $\mbox{Alg}_{\infty}$ of $\Ain$-algebras has finite products but, in general, no coproducts \cite[\S 4.1]{Val}.\ 
Hence in $\mbox{Alg}_{\infty}$ there are no cylinder objects (they would factor the codiagonal $A\sqcup A\to A$), while path objects (factorizations of the diagonal $B\to B\times B$) do exist: over a field, $\mbox{Alg}_{\infty}$ is a category of fibrant objects in the sense of Brown \cite{Bro}, see \cite[Th\'eor\`eme 1.3.3.1]{LH} and \cite[\S 4]{Val}.\ 
This is why right homotopies are the natural notion for $\Ain$-algebras.

\subsection*{Main results}
Let $f,g\colon A\to B$ be two $\Ain$-morphisms.\ Considered as morphisms of 2-colored DG operads, $f$ and $g$ are elementary homotopic if there is a morphism of 2-colored DG operads $\tau\colon A^2_\infty\to Hom(A,B)\otimes R$, where $R:=\mathbb{K}[t,dt]$, which gives $f$ for $t=0$ and $g$ for $t=1$.\ 
We write $f\asymp^{c}g$ if moreover $\tau$ can be chosen constant on the two monochromatic parts of $A^2_\infty$, i.e.\ equal to $m_A\otimes 1_R$ and $m_B\otimes 1_R$ there.

\begin{thmA}[Theorem \ref{thm:MSS}, Corollary \ref{cor:equiv}, Example \ref{ex:MSS}]
Suppose that $\mathbb{Q}\subset\mathbb{K}$, let $A$, $B$ be two (non unital) $\Ain$-algebras and let $f,g\colon A\to B$ be two $\Ain$-morphisms.\ Then
\begin{align*}
f\sim g\quad\Longleftrightarrow\quad f\asymp^{c}g .
\end{align*}
In particular $f\sim g$ implies $f\asymp g$, but the converse is false in general: without the constancy condition, elementary homotopy is strictly coarser than the homotopy of Seidel.\ In fact, we provide an example (Example (\ref{ex:MSS})) showing that $f\asymp g$ does not imply $f\approx g$ (and so $f\sim g$).
\end{thmA}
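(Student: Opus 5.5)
The plan is to translate both sides of the equivalence into the language of the bar construction and compare coefficients in $R=\mathbb{K}[t,dt]$. A morphism of 2-colored DG operads $\tau\colon A^2_\infty\to Hom(A,B)\otimes R$ which is constant on the monochromatic parts is the same thing as a family of multilinear maps
\[
\tau^n=\phi^n(t)+\psi^n(t)\,dt,\qquad \phi^n(t)\in Hom(A^{\otimes n},B)[t],\ \psi^n(t)\in Hom(A^{\otimes n},B)[t],
\]
such that the $\Ain$-morphism equations hold with coefficients in $R$. The structure maps on the two sides are $m_A\otimes 1_R$ and $m_B\otimes 1_R$, and the differential of $R$ enters the equations.

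Separating the $dt$-free and $dt$-linear components, I expect two conditions. First, each $\phi(t_0)$ is an $\Ain$-morphism $A\to B$ for every $t_0$. Second, $\frac{d}{dt}\phi(t)=\mathfrak{M}^1_{\phi(t)}(\psi(t))$, where the right side is the differential of a prenatural transformation $\phi(t)\to\phi(t)$ of degree $0$ with $\psi^0=0$. The reason is that $\psi$ is contracted against the $m_B$'s with $\phi(t)$ inserted on both sides. Equivalently, $\tau$ is an $\Ain$-morphism $A\to B\otimes R$ with $p_0\tau=f$ and $p_1\tau=g$. So the statement reduces to showing that $B\otimes R$ is a path object for $\sim$, i.e. that it represents the Seidel relation. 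I would invoke the paper's identification of $B\otimes R$ as a model for $[0,1]\times B$ in the sense of FOOO. I would also use the comparison morphisms between such models, relating $B\otimes R$ to the path object $I\otimes B$ used for $\sim$ in Section \ref{sec:homotopies}.

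For ($\Leftarrow$), integrate. Set $T:=\int_0^1\psi(t)\,dt$, which requires $\mathbb{Q}\subset\mathbb{K}$. One would like $g-f=\int_0^1\mathfrak{M}^1_{\phi(t)}(\psi)\,dt$. However, $\mathfrak{M}^1_{\phi(t)}$ depends on $t$ through $\phi(t)$, so this is not literally $\mathfrak{M}^1_{f,g}(T)$. This is exactly where the comparison morphism $B\otimes R\to I\otimes B$ is needed. It should be given by iterated integrals (a Chen-type or homotopy transfer formula via the standard contraction of $R$ onto $\mathbb{K}\oplus\mathbb{K}$ together with the integration homotopy). Composing $\tau$ with it gives an $\Ain$-morphism $A\to I\otimes B$ compatible with the endpoints, which is a Seidel homotopy $T\colon f\to g$.

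For ($\Rightarrow$), given $T$ with $f-g=\mathfrak{M}^1(T)$ and $T^0=0$, I would use the comparison morphism in the other direction, $I\otimes B\to B\otimes R$, again over the endpoints. A concrete first attempt is the linear-interpolation ansatz $\phi(t)=(1-t)f+tg+\text{corrections}$ and $\psi=$ (corrected) $T$. The corrections would be solved order by order in the tensor length $n$, and the obstructions should vanish because of the $\Ain$-equations.

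The main obstacle is constructing these comparison morphisms and checking that they preserve the endpoint projections. The nonlinearity of $\mathfrak{M}^1_{\phi(t)}$ in $\phi$ is the crux, and this is the step that needs rational coefficients. Finally:
\begin{itemize}
\item $f\sim g\Rightarrow f\asymp g$ is immediate, since $\asymp^c$ implies $\asymp$.
\item Strictness requires Example \ref{ex:MSS}. There one takes $\tau$ varying the monochromatic $B$-part through a nontrivial family of $\Ain$-structures $m_B(t)$, so that $f$ and $g$ land in different homotopy classes of $H^0(\Fun(A,B))$. For instance, $f$ and $g$ could be chosen to be not isomorphic while still connected through the deformation.
\end{itemize}
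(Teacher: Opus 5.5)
Your overall architecture matches the paper's. A $\tau$ constant on the monochromatic parts becomes, via $\Phi(\varphi\otimes r)(x)=\pm\varphi(x)\otimes r$, an $\Ain$-morphism $A\to B\otimes R$ with the right endpoints. Homotopies are then moved between the models $B\otimes R$ and $I\otimes B$ by the comparison morphisms of Proposition \ref{prop:comparison}. This settles $f\asymp^{c}g\Rightarrow f\sim g$. No iterated-integral formula is needed, and $\mathbb{Q}\subset\mathbb{K}$ enters only through MI3 for the source $B\otimes R$.

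The gap is in $f\sim g\Rightarrow f\asymp^{c}g$, and it is hidden in your ``Equivalently''. The map $\Phi\colon Hom(A^{\otimes n},B)\otimes R\to Hom(A^{\otimes n},B\otimes R)$ is injective but not surjective: a map $A^{\otimes n}\to B\otimes R$ can have unbounded $t$-degree when $A^{\otimes n}$ is not finitely generated. So an $\Ain$-morphism $A\to B\otimes R$ with the right endpoints need not come from any $\tau$. Composing the Seidel homotopy $\mathscr{H}\colon A\to I\otimes B$ with a comparison morphism $I\otimes B\to B\otimes R$ as in Proposition \ref{prop:comparison} gives exactly such a morphism and nothing more, because the correction $X$ chosen there is only determined up to cycles of $Hom(Q_n,J_2)$.

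The missing step is Lemma \ref{lem:finite}: $\mathfrak{F}$ can be chosen with every $\mathfrak{F}^k$ of finite type. The paper does this by running the obstruction argument inside the finite-type maps. The section $b_0\otimes(1-t)+b_1\otimes t$ and $\text{Incl}^1\circ f^n$ are of finite type, and finite type is stable under precomposition and under $m_{B\otimes R}$. The finite-type part of $Hom(Q_n,J_2)$ is $Hom(Q_n,B)\otimes J$, which is still contractible. Then $F=\mathfrak{F}\circ\mathscr{H}$ is of finite type, and $\tau(\T^{b^{\times n}}_r):=\Phi^{-1}(F^n)$ works.

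Your direct ansatz $\phi(t)=(1-t)f+tg+\dots$ does not substitute for this. The $\Ain$-equations only make each obstruction a cycle. In $Hom(A^{\otimes n},B\otimes J)$, with $J=\ker(\text{ev}_0,\text{ev}_1)$, cycles need not be boundaries, since $H^1(J)\cong\mathbb{K}$ via $\int_0^1$. What kills the obstructions in the paper is working universally on $(I\otimes B)^{\otimes n}$ relative to $\text{Incl}$, so that they land in $Hom(Q_n,J_2)$ with $Q_n$ contractible.

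Second, the strictness claim is not proved. You describe what Example \ref{ex:MSS} might look like, but give no $A$, $B$, $f$, $g$, $\tau$ and no argument for $f\not\approx g$. The mechanism is also not a genuine family $m_B(t)$. In the paper the $dt$-free parts of the red corollas stay equal to $m_B$, and only a $dt$-component is added, $\tau(\T^{r,r}_r)=m^2_B\otimes 1+\psi\otimes dt$, which the endpoint conditions cannot see. The data are as follows:
\begin{itemize}
\item $A=\mathbb{K}a$ in degree $0$ with $m_A=0$;
\item $B=\mathbb{K}1\oplus V$, with $1$ a strict unit, $V:=\mathbb{K}e\oplus\mathbb{K}u$, $\deg(u)=-1$ and $V\cdot V=0$;
\item $\psi(e,e)=u$ and zero on the other basis pairs, so that $\sum_k(m^2_B\circ_k\psi-\psi\circ_k m^2_B)=0$;
\item $\tau(\T^{b}_r)=f^1\otimes 1$ with $f^1(a)=e$, and $\tau(\T^{b,b}_r)=-\psi\circ(f^1\otimes f^1)\otimes t$.
\end{itemize}
This connects $f=(f^1)$ to $g=(f^1,g^2)$ with $g^2(a,a)=-u$. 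Finally, $f\not\approx g$: the relation $\mathfrak{M}^1(H)^2(a,a)=-\lambda u$ forces every natural $H\colon f\to g$ of degree $0$ to have $H^0\in\mathbb{K}e$, and no such element is invertible in $H^0(B)$.
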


Both implications rely on comparison morphisms between models for the interval.\ 
The implication $f\asymp^{c}g\Rightarrow f\sim g$ uses that $B\otimes R$ is a model for the interval $[0,1]\times B$ (Theorem \ref{thm:R}), and the independence of the homotopy relation from the choice of the model (Theorem \ref{thm:indep}), which is due to Fukaya-Oh-Ohta-Ono \cite[Theorem 4.2.34 and Lemma 4.2.36]{FOOO1}.\ 
In the unfiltered case we give a short proof of it (Proposition \ref{prop:comparison}): the induction on the number filtration of \cite[\S 4.4]{FOOO1} is replaced by the contractibility of a complex of the form $Hom(Q,J)$, where $Q$ is the cokernel of a tensor power of $\text{Incl}$ and $J$ is the kernel of $\text{Eval}_0\times\text{Eval}_1$.\ 
The implication $f\sim g\Rightarrow f\asymp^{c}g$ uses a refinement of the same argument (Lemma \ref{lem:finite}): the comparison morphism $I\otimes B\to B\otimes R$ can be chosen with components in $Hom\big((I\otimes B)^{\otimes k},B\big)\otimes R\subset Hom\big((I\otimes B)^{\otimes k},B\otimes R\big)$.

Over a field, the independence of the model is also a consequence of the (almost) model structure on $\mbox{Alg}_{\infty}$ of Lef\`evre-Hasegawa \cite{LH} and Vallette \cite{Val}.

\begin{propB}[Propositions \ref{prop:path} and \ref{prop:indep}]
Let $\mathbb{K}$ be a field.\ Every model for the interval $[0,1]\times B$ is a very good path object for $B$ in $\mbox{Alg}_{\infty}$.\ 
If two $\Ain$-morphisms $f,g\colon A\to B$ are right homotopic through some path object of $B$, then they are right homotopic through every good path object of $B$.\ 
In particular $f\sim_{\mathfrak{C}_B}g$ if and only if $f\sim_{\mathfrak{C}'_B}g$ for any two models $\mathfrak{C}_B$, $\mathfrak{C}'_B$ for the interval.
\end{propB}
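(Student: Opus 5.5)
The plan is to work in the framework of categories of fibrant objects in the sense of Brown. Over a field, $\mbox{Alg}_{\infty}$ with weak equivalences the $\Ain$-quasi-isomorphisms and fibrations the $\Ain$-morphisms whose linear part $f^1$ is surjective is such a category \cite{LH}, \cite{Val}. In it every object is fibrant, since $B\to 0$ is surjective.

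\textbf{Step 1: models are very good path objects.} Let $\mathfrak{C}_B$ be a model for $[0,1]\times B$ in the sense of \cite{FOOO1}. It comes with a strict inclusion $\text{Incl}\colon B\to\mathfrak{C}_B$ and evaluations $\text{Eval}_0,\text{Eval}_1\colon\mathfrak{C}_B\to B$ satisfying $\text{Eval}_i\circ\text{Incl}=\Id$. The axioms also require $\text{Incl}$ (equivalently each $\text{Eval}_i$) to be a quasi-isomorphism on linear parts, and $(\text{Eval}_0,\text{Eval}_1)\colon\mathfrak{C}_B\to B\oplus B$ to be surjective on the underlying complexes. Products in $\mbox{Alg}_{\infty}$ are computed on $B\oplus B$ with the componentwise structure, so:
\begin{itemize}
\item $\text{Eval}_0\times\text{Eval}_1$ is a fibration, since its linear part is surjective;
\item $\text{Incl}$ is a weak equivalence;
\item the composite $B\to\mathfrak{C}_B\to B\times B$ is the diagonal.
\end{itemize}
Hence $\mathfrak{C}_B$ is a very good path object. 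I would check this first on $B\otimes R$ via Theorem \ref{thm:R}, then for an arbitrary model straight from the definition.

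\textbf{Step 2: independence for good path objects.} Let $P$ be any path object, with $B\xrightarrow{s}P\xrightarrow{(p_0,p_1)}B\times B$, and let $P'$ be a good path object, so that $(p'_0,p'_1)$ is a fibration. Suppose $H\colon A\to P$ with $p_iH=f,g$. First factor $s$ as a weak equivalence followed by a fibration, using the factorization lemma in a category of fibrant objects. Replacing $P$ by the resulting very good path object, I may assume $P$ is very good. The key lift is the square
\[
\begin{tikzcd} B\ar[r,"s'"]\ar[d,"s"'] & P'\ar[d,"{(p'_0,p'_1)}"] \\ P\ar[r,"{(p_0,p_1)}"'] & B\times B \end{tikzcd}
\]
where $s$ is a trivial cofibration. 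I expect this to be the main obstacle: Brown's axioms give no lifting against cofibrations, and $\mbox{Alg}_{\infty}$ is only an almost model category.

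\textbf{Step 3: constructing the comparison map.} There are two routes.
\begin{itemize}
\item Invoke Vallette's lifting property directly: over a field, every $\Ain$-algebra is cofibrant in the relevant sense, and $\infty$-quasi-isomorphisms admit homotopy inverses.
\item Use Brown's argument. Both $P\times_{B\times B}P'$ and $P'$ are path objects, and a map $\phi\colon P\to P'$ over $B\times B$ is obtained by lifting, as in Proposition \ref{prop:comparison}. Concretely, over a field the strict map $s$ is a split injection with quasi-isomorphic cokernel of zero homology. The obstruction complex $Hom(Q,J)$ is then contractible, where $Q$ is the cokernel of the tensor powers of $s$ and $J=\ker(p'_0,p'_1)$ is acyclic. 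The components of $\phi$ are built inductively on $k$.
\end{itemize}
Then $\phi\circ H$ is a homotopy through $P'$, because $p'_i\phi=p_i$.

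\textbf{Step 4: conclusion.} The final statement follows from Steps 1 and 2 applied in both directions to $\mathfrak{C}_B$ and $\mathfrak{C}'_B$.
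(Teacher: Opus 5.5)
Your plan has the same shape as the paper's proof: replace the first map by a trivial cofibration, then lift against the fibration $q'=(p'_0,p'_1)$ of the good path object. But the two lifts that actually carry the argument are not supplied.

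(i) Transporting the homotopy. After factoring $s=\pi\circ j$, with $j\colon B\to Q$ a trivial cofibration and $\pi\colon Q\to P$ a fibration, you write that you ``may assume $P$ is very good''. That is not right: $q\circ\pi$ need not be a fibration, since $q=(p_0,p_1)$ is arbitrary. This is harmless, because only $j$ enters the square. What is missing is moving the homotopy $H\colon A\to P$ to $Q$. You need $\widetilde{H}\colon A\to Q$ with $\pi\circ\widetilde{H}=H$. This uses that $\pi$ is a trivial fibration (by 2-out-of-3) together with a lift of the cofibration $0\to A$ against $\pi$.

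(ii) The square of $j$ against $q'$. You are right that Brown's axioms give no lifting. However, the statement itself (``very good'', ``trivial cofibration'') already forces you to use the almost model structure of Theorem \ref{thm:LH}. There the cofibrations are the $\Ain$-morphisms with injective linear part, and the lifting axiom (MC4) holds. Applying (MC4) to both squares gives $\lambda\colon Q\to P'$ with $\lambda\circ j=s'$ and $q'\circ\lambda=q\circ\pi$. Then $\lambda\circ\widetilde{H}$ is the homotopy through $P'$. This is exactly the paper's argument, so the ``main obstacle'' is not really there.

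In Step 1 you also need that $\text{Incl}$ is a cofibration, which is what ``very good'' requires. By MI1 and MI2, $(\text{Eval}_0)^1\circ\text{Incl}^1=\Id_B$, so $\text{Incl}^1$ is injective.

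Neither of your Step 3 routes closes the gap as written.

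Route 1 is only a gesture: homotopy invertibility of $\infty$-quasi-isomorphisms is not a lifting property against fibrations.

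Route 2 transplants the obstruction argument of Proposition \ref{prop:comparison}, and there are two problems with it.
\begin{itemize}
\item That argument relies on $\text{Eval}_s$ and $\text{Incl}$ being strict. Strictness is what reduces the compatibilities to the linear conditions (\ref{eq:compatibleK}) and makes it immediate that $\text{Err}_n-d(G)$ lies in $M^0_n$. In a general path object $s$, $j$ and $q'$ need not be strict (Remark \ref{rem:FOOO}). So ``the strict map $s$'' is an unjustified assumption, and the induction would have to be redone including the higher components of $j$ and $q'$.
\item $J=\ker\big((q')^1\big)$ is not acyclic. From $0\to J\to P'\to B\oplus B\to 0$, and the fact that $H(P')\cong H(B)$ maps diagonally into $H(B)\oplus H(B)$, one gets $H(J)\cong H(B)[-1]$. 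For instance, $J=\mathbb{K}h\otimes B$ for $I\otimes B$. In the paper, the contractibility of $Hom(Q_n,J)$ comes from the contractibility of $Q_n$, not from $J$.
\end{itemize}
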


Proposition B gives a model-categorical interpretation of the models for the interval, and of the relation $\sim$ as the right homotopy relation of $\mbox{Alg}_{\infty}$ (Corollary \ref{cor:sim-right}).\ 
Finally, the same point of view explains why $f\approx g$ does not imply $f\sim g$ (Remark \ref{rem:notgood}): $I\otimes B$ is a good path object in $\mbox{Alg}_{\infty}$ (whose weak equivalences are the quasi-isomorphisms), but it is still a path object, which is not good, in the Tabuada model structure on DG categories (whose weak equivalences are the quasi-equivalences), where a good path object is $P(B)$, see Remark \ref{diag:approx}.

\subsection*{$\skull$ Disclaimer} During the writing of these notes we were informed by Guillaume Laplante-Anfossi, Thibaut Mazuir and Bingyu Zhang that there is another characterization of the (Seidel) homotopy making use of a particular cylinder object.\ Using this characterization part of the results of this paper follows by abstract nonsense at the cost of checking some technical details and assuming some extra hypotheses.\ On the other hand, this strategy is very different in spirit from ours.

\tableofcontents

\subsection*{Notation and conventions}
We work over a commutative ring $\mathbb{K}$.\ All our categories are assumed to be $\mathbb{K}$-linear and small.\ 
Given a category $\mathscr{C}$, we denote by $\text{Ob}(\mathscr{C})$ its set of objects and by $\mathscr{C}(c_1,c_2)$ the hom-space between $c_1,c_2\in\text{Ob}(\mathscr{C})$.\ 
Given a graded $\mathbb{K}$-module $M=\oplus_{i\in\mathbb{Z}}M^i$ and an integer $n$, the shifted graded module $M[n]$ is defined by $M[n]^i:=M^{n+i}$.\ Complexes are cohomological, namely $d_M\colon M^i\to M^{i+1}$.\ If $x\in M^i$ is homogeneous, we set $\deg(x):=i$ and $\deg'(x):=\deg(x)-1$ (the \emph{shifted degree}).\ 
A $\mathbb{K}$-linear map $\phi\colon M\to N$ between graded $\mathbb{K}$-modules has degree $k$ if $\phi(M^i)\subseteq N^{i+k}$ for every $i$, i.e.\ if it is a degree $0$ map $M\to N[k]$.

\section{Background and notation}\label{sec:background}

In this section we briefly recall the basic notions about $\Ain$-categories and $\Ain$-algebras.\ 
We adopt the sign conventions of Seidel's book \cite{Sei}, as in \cite{Orn1}.\ They are different from, but equivalent to, the conventions of Lef\`evre-Hasegawa \cite{LH} used in \cite{COS1} and \cite{COS2}, see \cite[(1.1)]{COS2}; when we quote results from \cite{COS1} and \cite{COS2}, we implicitly use this identification.
\begin{defn}\label{defn:Ainfty}
An \emph{$\Ain$-category} $\A$ is a graded $\mathbb{K}$-linear quiver equipped with $\mathbb{K}$-linear maps
\begin{align*}
m^n_{\A}\colon \A(x_{n-1},x_n)\otimes...\otimes \A(x_0,x_1)\to \A(x_0,x_n)[2-n],
\end{align*}
for every positive integer $n$ and all $x_0,...,x_n\in\text{Ob}(\A)$, such that
\begin{align}\label{eq:Ainf-cat}
\sum^n_{m=1}\sum^{n-m}_{d=0}(-1)^{\dagger_d}\, m_{\A}^{n-m+1}\big(a_n,...,a_{d+m+1},m_{\A}^{m}(a_{d+m},...,a_{d+1}),a_d,...,a_1\big)=0,
\end{align}
where $\dagger_d:=\deg'(a_d)+...+\deg'(a_1)=\deg(a_d)+...+\deg(a_1)-d$.\ 
We say that $\A$ is \emph{strictly unital} if for every object $x$ there is $1_x\in \A(x,x)^0$ such that $m^1_{\A}(1_x)=0$, 
$m^2_{\A}(a,1_x)=a$ and $m^2_{\A}(1_y,a)=(-1)^{\deg(a)}a$ for every $y\in\text{Ob}(\A)$ and every $a\in \A(x,y)$, and $m^n_{\A}(a_n,...,a_1)=0$ whenever $n\ge3$ and some $a_i$ is a unit $1_x$.\ 
An \emph{$\Ain$-algebra} is an $\Ain$-category with one object.
\end{defn}
\begin{defn}
An $\Ain$-functor $\Ff\colon \A\to \B$ is given by a map of sets $\Ff^0\colon\mbox{Ob}(\A)\to\mbox{Ob}(\B)$ together with $\mathbb{K}$-linear maps
\begin{align*}
\Ff^n\colon \A(x_{n-1},x_n)\otimes...\otimes \A(x_0,x_1)\to \B\big(\Ff^0(x_0),\Ff^0(x_n)\big)[1-n],
\end{align*}
for every positive integer $n$ and all $x_0,...,x_n\in\text{Ob}(\A)$, such that
\begin{equation}\label{eq:Ainf-fun}
\begin{split}
\sum_{r\ge1}\ \sum_{\substack{s_1+...+s_r=n\\ s_1,...,s_r\ge1}}&m^{r}_{\B}\big(\Ff^{s_r}(a_n,...,a_{n-s_r+1}),...,\Ff^{s_1}(a_{s_1},...,a_1)\big)\\
&=\sum_{m=1}^n\sum_{k=0}^{n-m}(-1)^{\dagger_k}\, \Ff^{n-m+1}\big(a_n,...,a_{k+m+1},m^{m}_{\A}(a_{k+m},...,a_{k+1}),a_k,...,a_1\big).
\end{split}
\end{equation}
We denote by $(\star_n)$ the equation (\ref{eq:Ainf-fun}) for a fixed $n$.\ If $\A$ and $\B$ are strictly unital, the $\Ain$-functor $\Ff$ is \emph{strictly unital} if $\Ff^1(1_x)=1_{\Ff^0(x)}$ for every $x\in\text{Ob}(\A)$, and $\Ff^n(a_n,...,a_1)=0$ whenever $n\ge2$ and some $a_i$ is a unit.\ 
An \emph{$\Ain$-morphism} is an $\Ain$-functor between two $\Ain$-algebras.\ An $\Ain$-functor (resp.\ $\Ain$-morphism) $\Ff$ is \emph{strict} if $\Ff^n=0$ for every $n\ge2$.
\end{defn}
\begin{defn}\label{defn:DG}
A \emph{DG category} is a strictly unital $\Ain$-category $\A$ such that $m^n_{\A}=0$ for every $n\ge3$; a \emph{DG algebra} is a DG category with one object.\ 
A strict and strictly unital $\Ain$-functor between two DG categories is called a \emph{DG functor}.
\end{defn}
Let $\A$ and $\B$ be two (non unital) $\Ain$-categories and let $\Ff,\Gf\colon\A\to\B$ be two (non unital) $\Ain$-functors.\ 
The cohomology $H(\A)$ is a non-unital graded category, with composition $[a_2]\cdot[a_1]:=(-1)^{\deg(a_1)}[m^2_{\A}(a_2,a_1)]$, and $\Ff$ induces a non-unital graded functor $H(\Ff)\colon H(\A)\to H(\B)$.\ 
We say that $\A$ is \emph{cohomologically unital} if $H(\A)$ has identities.\ 
If $\B$ is cohomologically unital, $\Ff$ is a \emph{quasi-equivalence} if $\Ff^1$ is a quasi-isomorphism on every hom-complex and $H(\Ff)$ is essentially surjective, cf.\ \cite[Definition 1.13]{COS2}; in this case $\A$ is cohomologically unital and $H(\Ff)$ is an equivalence of graded categories.\ 
We say that an $\Ain$-morphism $f$ is a \emph{quasi-isomorphism} if $f^1$ induces an isomorphism in cohomology.\\

Recall that the category of DG categories has a model structure whose weak equivalences are the quasi-equivalences \cite{Tab1}.\ 
On the other hand, the category of $\Ain$-categories does not admit a model structure with all limits and colimits, see \cite[\S1.5]{COS1}; for $\Ain$-algebras over a field see however Theorem \ref{thm:LH}.
It is well known that the $\Ain$-functors $\A\to\B$ are the objects of an $\Ain$-category $\Fun(\A,\B)$, see \cite{Lyu} and \cite{Sei}, whose morphisms are the prenatural transformations:

\begin{defn}\label{prenat}
A \emph{prenatural transformation} $T\colon\Ff\to\Gf$ of degree $p$ is a family $T=\{T^n\}_{n\ge 0}$, where:
\begin{itemize}
\item $T^0(x)\in \B\big(\Ff^0(x),\Gf^0(x)\big)^p$ for every $x\in\mbox{Ob}(\A)$;
\item for every $n\ge1$ and all $x_0,...,x_n\in \mbox{Ob}(\A)$,
\begin{align*}
T^n\colon \A(x_{n-1},x_n)\otimes...\otimes \A(x_0,x_1)\to \B\big(\Ff^0(x_0),\Gf^0(x_n) \big)[p-n]
\end{align*}
is a $\mathbb{K}$-linear map of degree $0$, i.e.\ $T^n$ has degree $p-n$.
\end{itemize}
\end{defn}

The $\Ain$-category of $\Ain$-functors from $\A$ to $\B$ is denoted by
\begin{align}\label{Funzo}
\big(\Fun(\A,\B),\mathfrak{M}^n\big).
\end{align}
Given $T\colon\Ff\to\Gf$, the prenatural transformation $\mathfrak{M}^1(T)\colon\Ff\to\Gf$ is defined by $\mathfrak{M}^1(T)^0(x):=m^1_{\B}(T^0(x))$ and, for $n\ge1$:
\begin{align*}
\mathfrak{M}^1(T)^n(a_n,...,a_1):=\ &\sum_{r\ge1}\ \sum_{i=1}^{r}\ \sum_{s_1+...+s_r=n}(-1)^{\diamond_i}\\
&\qquad m^r_{\B}\big(\Gf^{s_r}(a_n,...),...,\Gf^{s_{i+1}}(...),T^{s_i}(...),\Ff^{s_{i-1}}(...),...,\Ff^{s_1}(...,a_1)\big)\\
&-\sum_{m=1}^{n}\ \sum_{k=0}^{n-m}(-1)^{p-1+\dagger_k}\,T^{n-m+1}\big(a_n,...,a_{k+m+1},m^{m}_{\A}(a_{k+m},...,a_{k+1}),a_k,...,a_1\big),
\end{align*}
where in the first sum $s_i\ge0$ and $s_j\ge1$ for $j\neq i$ and $\diamond_i:=(p-1)\,\dagger_{s_1+...+s_{i-1}}$; see \cite[(1.8)]{Sei} and \cite[Definition 1.3.2]{Orn1}.\ 
We say that a prenatural transformation $T$ is \emph{natural} if $\mathfrak{M}^1(T)=0$.\ 
There is a strictly unital version of Definition \ref{prenat}, cf.\ \cite[Remark 1.24]{COS1} and \cite[Definition 1.17]{COS2}, and $\Fun(\A,\B)$ has the same structure as $\B$, e.g. if $\B$ is DG (resp.\ strictly unital), then so is (\ref{Funzo}).

We conclude this section by introducing a notation of Fukaya-Oh-Ohta-Ono \cite[p.~218]{FOOO1}, which will be useful in the following sections.\ 
Let $A$ and $C$ be two $\Ain$-algebras.\ We denote by $Hom(A^{\otimes n},C)$ the complex of graded $\mathbb{K}$-linear maps.\ 
The differential is
\begin{align*}
d_{Hom(A^{\otimes n},C)}(\phi)(a_n,...,a_1):=\ &m^1_C\big(\phi(a_n,...,a_1)\big)\\
&-(-1)^{\deg'(\phi)}\sum_{j=1}^n(-1)^{\dagger_{j-1}}\phi\big(a_n,...,a_{j+1},m^1_A(a_j),a_{j-1},...,a_1\big).
\end{align*}
Let $f=(f^n)_{n\ge1}$ be a family of maps $f^n\in Hom(A^{\otimes n},C)$ of degree $1-n$.\ 
Isolating the terms containing $m^1$, the condition $(\star_n)$ can be rewritten as
\begin{equation}\label{eq:starn-d}
\begin{split}
d_{Hom(A^{\otimes n},C)}(f^n)\big(a_n,...,a_1\big)=&\sum_{j=2}^{n}\ \sum_{k=0}^{n-j}(-1)^{\dagger_k} f^{n-j+1}\big(a_n,...,a_{k+j+1},m^j_A(a_{k+j},...,a_{k+1}),a_k,...,a_1\big)\\
&-\sum_{r=2}^{n}\ \sum_{\substack{s_1+...+s_r=n\\ s_1,...,s_r\ge1}} m^r_{C}\big(f^{s_r}(a_n,...,a_{n-s_r+1}),...,f^{s_1}(a_{s_1},...,a_1)\big).
\end{split}
\end{equation}
We denote by $\text{Err}_n(f)$ the right hand side of (\ref{eq:starn-d}).\ Note that it depends only on $f^1,...,f^{n-1}$.\ 
If $f^1,...,f^{n-1}$ satisfy $(\star_k)$ for every $k<n$, then $\text{Err}_n(f)$ is a cycle of $Hom(A^{\otimes n},C)$.\ 
In particular, if $f$ is an $\Ain$-morphism, then $\text{Err}_n(f)=d_{Hom(A^{\otimes n},C)}(f^n)$ for every $n$.

\section{Homotopies}\label{sec:homotopies}

In this section we recall the notions of homotopy between $\Ain$-functors.\ 
In particular we define when two $\Ain$-functors (or $\Ain$-morphisms) are \emph{homotopic} and \emph{weakly equivalent} (Subsection \ref{subsec:homotwekequi}), \emph{homotopic via an interval} (Subsection \ref{susec:interval}), and \emph{elementary homotopic}, as morphisms of 2-colored DG operads (Subsection \ref{subsec:elemhomot}).\ 
Finally we introduce elementary homotopies \emph{constant on the monochromatic parts} which is new in the literature.

\subsection{Homotopic and weakly equivalent $\Ain$-functors}\label{subsec:homotwekequi}
Let $\Ff,\Gf\colon\A\to\B$ be two non unital $\Ain$-functors between two non unital $\Ain$-categories.
\begin{defn}\label{homto}
We say that two $\Ain$-functors $\Ff,\Gf\colon \A\to \B$ are \emph{homotopic}, in formula $\Ff\sim \Gf$, if: 
\begin{itemize}
\item $\Ff^0=\Gf^0$ as maps of sets,
\item there exists a prenatural transformation $T\colon\Ff\to\Gf$ of degree $0$ such that 
\begin{align*}
\Ff-\Gf=\mathfrak{M}^1(T)\qquad\text{and}\qquad T^0=0.
\end{align*}
\end{itemize}
\end{defn}
There is an equivalent definition inspired by rational homotopy theory, cf.\ \cite[Remark 1.11]{Sei} and \cite[Chapter X]{GrMo}.\ 
We say that $\Ff$ is \emph{homotopic} to $\Gf$ if there exists an $\Ain$-functor $\mathscr{H}\colon \A\to I\otimes \B$ fitting the diagram:
\begin{align}\label{diag:pathint}
\xymatrix{
&\B\\
\A\ar[r]^-{\mathscr{H}}\ar[ur]^{\Ff}\ar[dr]_{\Gf}&I\otimes \B\ar[u]_{p_0\otimes\tiny\mbox{Id}_{\B}}\ar[d]^{p_1\otimes\tiny\mbox{Id}_{\B}}\\
&\B
}
\end{align}
Here $I$ is the DG algebra 
\begin{align}\label{cat:I}
I:= \mathbb{K}u_0 \oplus \mathbb{K}u_1\oplus \mathbb{K}h,\qquad \deg(u_0)=\deg(u_1)=0,\quad \deg(h)=1,
\end{align}
with differential $du_0=h$, $du_1=-h$, $dh=0$, and product given by:
\begin{align*}
u_0u_0=u_0,\quad u_1u_1=u_1, \quad u_0u_1=u_1u_0=h u_1=u_0h=hh=0,\quad  u_1h=hu_0=h.
\end{align*}
The unit of $I$ is $u_0+u_1$.\ 
Note that $I\otimes\B$ is the tensor product of a DG algebra and an $\Ain$-category, which is a particular case of the tensor product of two $\Ain$-categories, see \cite[(5)]{Orn3}.\ 
We have two quasi-isomorphisms of DG algebras
\begin{align*}
p^1_0:I&\to \mathbb{K}\\
xu_0+yu_1+zh&\mapsto x\cdot 1_{\mathbb{K}}
\end{align*}
and
\begin{align*}
p^1_1:I&\to \mathbb{K}\\
xu_0+yu_1+zh&\mapsto y\cdot 1_{\mathbb{K}}.
\end{align*}
Moreover, we have a morphism of DG algebras 
\begin{align*}
i\colon \mathbb{K} &\to I\\
k 1_{\mathbb{K}}&\mapsto k(u_0+u_1)
\end{align*}
providing a strict $\Ain$-functor $i\otimes\Id_{\B}\colon \B \to I\otimes \B$.\ Similarly the two projections $p^1_k\otimes\Id_{\B}\colon I\otimes\B\to\B$ ($k=0,1$) are strict $\Ain$-functors.

\begin{rem}
Definition \ref{homto} applies to $\Ain$-algebras; in this case the condition $\Ff^0=\Gf^0$ is automatic.
\end{rem}

Now we suppose that $\B$ is strictly unital, so that $\Fun(\A,\B)$ is strictly unital.\ If $\B$ is only unital, one uses the units of $H^0(\Fun(\A,\B))$, cf.\ \cite[Definition 1.18]{COS2}.

\begin{defn}
Two (non unital) $\Ain$-functors $\Ff,\Gf\colon\A\to\B$ are \emph{weakly equivalent}, in formula $\Ff\approx\Gf$, if they are isomorphic in $H^0(\Fun(\A,\B))$.\ Namely, there exist two natural transformations $H\colon\Ff\to \Gf$ and $H'\colon\Gf\to \Ff$ of degree $0$ and two prenatural transformations $W\colon\Ff\to \Ff$ and $W'\colon\Gf\to \Gf$ of degree $-1$ such that: 
\begin{align}\label{appross}
\mathfrak{M}^2(H',H)=\Id_{\Ff}+\mathfrak{M}^1(W)\mbox{ and } \mathfrak{M}^2(H,H')=\Id_{\Gf}+\mathfrak{M}^1(W').
\end{align}
Here $\Id_{\Ff}$ is the natural transformation with $(\Id_{\Ff})^0(x)=1_{\Ff^0(x)}$ for every $x\in\text{Ob}(\A)$ and $(\Id_{\Ff})^n=0$ for $n\ge1$, and similarly for $\Id_{\Gf}$.
\end{defn}

Now we assume that $\A$ and $\B$ are strictly unital, and we denote by $\Fun$ the strictly unital version of the $\Ain$-category of $\Ain$-functors, whose objects are the strictly unital $\Ain$-functors and whose morphisms are the strictly unital prenatural transformations (it is denoted by $\Fun^u_{\infty}$ in \cite{Orn2}).

We denote by $[0]=\mathbb{K}$ the DG category with one object, we have an isomorphism $\Fun([0], \B)\cong\B$.\ 
We denote by $[1]$ the DG category with two objects $0,1$, whose only non-identity morphism is a closed morphism of degree zero
\begin{align*}
\xymatrix{
0\ar[r]^{j_{01}}&1,
}
\end{align*} 
see \cite[Definition 3.1]{Orn5}, and by $\overline{[1]}$ the DG category with two objects $0,1$ and $\overline{[1]}(a,b)=\mathbb{K}\cdot j_{ab}$ concentrated in degree $0$, for $a,b\in\{0,1\}$, where $j_{00}=\Id_0$, $j_{11}=\Id_1$ and
\[
\xymatrix{
0\ar@/^/[r]^{j_{01}}&1\ar@/^/[l]^{j_{10}}
}
\]
with $j_{01}\circ j_{10}=\Id_1$ and $j_{10}\circ j_{01}=\Id_0$.\ 
We have DG functors $i_0,i_1\colon[0]\to[1]$ and $p_{01}\colon[1]\to [0]$, with $i_0(0)=0$, $i_1(0)=1$ and $p_{01}(j_{01})=\Id_0$, and similarly for $\overline{[1]}$.

Let $P(\B)\subseteq\Fun([1],\B)$ be the full $\Ain$-subcategory whose objects are the strictly unital $\Ain$-functors $\mathscr{X}\colon[1]\to\B$ such that $\mathscr{X}^1(j_{01})$ is a homotopy equivalence, i.e.\ its class is invertible in $H^0(\B)$.\ 
Composing with $i_0$, $i_1$ and $p_{01}$ we obtain three strict $\Ain$-functors\footnote{Using Seidel's notation \cite[(1e)]{Sei}, $s=\mathcal{R}_{i_0}$, $t=\mathcal{R}_{i_1}$ and $p=\mathcal{R}_{p_{01}}$; note that $\mathcal{R}_{-}$ is always a strict $\Ain$-functor.}
\begin{align*}
s\colon P(\B)\to \Fun([0],\B)\cong \B,\qquad
t\colon P(\B)\to  \Fun([0],\B)\cong\B,\qquad
p\colon \B\cong\Fun({[0]},\B)\to P(\B).
\end{align*}

If $\Ff \approx \Gf$, there exist two natural transformations $H\colon \Ff\to \Gf$ and $H'\colon \Gf\to\Ff$ satisfying (\ref{appross}).\ 
In particular, for every $x\in\text{Ob}(\A)$ we have $m^2_{\B}(H'^0(x),H^0(x))=1_{\Ff^0(x)}+m^1_{\B}(W^0(x))$ and $m^2_{\B}(H^0(x),H'^0(x))=1_{\Gf^0(x)}+m^1_{\B}(W'^0(x))$, so that $H^0(x)$ is a homotopy equivalence.\ 
Hence the $\Ain$-functor $\psi_{H}\colon \A\to\Fun([1],\B)$ of \cite[Lemma 2.8 and Corollary 2.9]{Orn2} takes values in $P(\B)$, and we have the commutative diagram:
\begin{align}\label{curzolo}
\xymatrix{
&\B&\\
\A\ar[ur]^{\Ff}\ar[dr]_{\Gf}\ar[r]^-{\psi_H}&{P(\B)}\ar[u]_s^{\simeq}\ar[d]^t_{\simeq}&\ar@{=}[ul]\ar@{=}[dl] \B\ar[l]_-{p}^-{\simeq}\\
&\B&
}
\end{align}
We recall that in diagram (\ref{curzolo}) the only non-strict $\Ain$-functors are $\Ff$, $\Gf$ and $\psi_H$.

\begin{rem}\label{diag:approx}
If $\B$ is a DG category, then $P(\B)$ is the DG category described in \cite[Lemma 4.8]{Gen}, which is the path object of $\B$ in the Tabuada model structure, see \cite[\S3]{Tab2} and \cite[\S 2.2]{CS}.\ 
Explicitly, an object of $P(\B)$ is a triple $(x,y,\varphi)$, where $\varphi\in\B(x,y)^0$ is a homotopy equivalence (the image of $j_{01}$), and a morphism $(x,y,\varphi)\to(x',y',\varphi')$ of degree $q$ is a matrix
\begin{align*}
\Bigl(\:\begin{matrix}
u & 0\\
\eta & v
\end{matrix}\:\Bigr),\qquad u\in\B(x,x')^q,\quad v\in\B(y,y')^q,\quad \eta\in\B(x,y')^{q-1},
\end{align*}
corresponding to the strictly unital prenatural transformation $T$ with $T^0(0)=u$, $T^0(1)=v$ and $T^1(j_{01})=\eta$.\ The composition and the differential are given in \cite[Definition 1.3.11]{Gen} and correspond to $\mathfrak{M}^2$ and $\mathfrak{M}^1$, cf.\ \cite[(1.8) and (1.9)]{Sei}.\ 
Restricting along the inclusion $[1]\hookrightarrow\overline{[1]}$ we get a strict $\Ain$-functor $\Fun(\overline{[1]},\B)\to P(\B)$, compatible with $s$, $t$ and $p$.
\end{rem}

\subsection{The ($\Ain$-algebra) interval $[0,1]$}\label{susec:interval}

In the previous subsection we have seen that, under suitable assumptions on $\A$ and $\B$, the relations $\Ff\sim\Gf$ and $\Ff\approx\Gf$ can be described by diagrams of the form
\begin{align}\label{diag:int}
\xymatrix{
&\B\\
\A\ar[r]^-{\mathscr{H}}\ar[ur]^{\Ff}\ar[dr]_{\Gf}&\hat{\B}\ar[u]_{\alpha_{\B}}\ar[d]^{\beta_{\B}}\\
&\B
}
\end{align}
where $\hat{\B}$, $\alpha_{\B}$ and $\beta_{\B}$ depend on the $\Ain$-category $\B$.\ 
In this subsection, assuming that $\A=A$ and $\B=B$ are $\Ain$-algebras, we recall the definition of $\Ain$-morphisms \emph{homotopic via an interval} (Definition \ref{larks}), which generalizes the situation in (\ref{diag:int}).\ 
First we need a good definition of $\hat{\B}$.

Recall that the product of two $\Ain$-algebras $A$ and $B$ is $A\times B:=A\oplus B$ with the componentwise operations
\begin{align*}
m^n_{A\times B}\big((a_n,b_n),...,(a_1,b_1)\big):=\big(m^n_A(a_n,...,a_1),m^n_B(b_n,...,b_1)\big).
\end{align*}
Note that the two projections are strict $\Ain$-morphisms.\

The next definition is the unfiltered version of \cite[Definition 4.2.1]{FOOO1}.
\begin{defn}\label{defn:model}
Let $B$ be an $\Ain$-algebra.\ A \emph{model for the interval} $[0,1]\times B$ is given by 
an $\Ain$-algebra $\mathfrak{C}_B$ and three $\Ain$-morphisms 
\begin{align}
\text{Eval}_0\colon \mathfrak{C}_B\to B,\quad \text{Eval}_1\colon \mathfrak{C}_B\to B,\quad \text{Incl}\colon B\to \mathfrak{C}_B,
\end{align}
such that:
\begin{itemize}
\item[MI1)] $\text{Eval}_0$, $\text{Eval}_1$ and $\text{Incl}$ are strict $\Ain$-morphisms. 
\item[MI2)] $\text{Eval}_0\circ \text{Incl}=\Id_B$, $\text{Eval}_1\circ \text{Incl}=\Id_B$.
\item[MI3)] $(\text{Eval}_0)^1$, $(\text{Eval}_1)^1$ and $\text{Incl}^1$ are cochain homotopy equivalences.
\item[MI4)] $\text{Eval}_0\times \text{Eval}_1\colon \mathfrak{C}_B\to B\times B$ is split surjective, i.e.\ $(\text{Eval}_0)^1\oplus (\text{Eval}_1)^1\colon \mathfrak{C}_B\to B\oplus B$ is a surjective map admitting a section as a map of graded $\mathbb{K}$-modules.\footnote{In the proof of \cite[Lemma 4.4.2]{FOOO1} (p.~219) this map is used as a \emph{split} surjection (by (4.2.2.4) and Remark 4.2.4 there), and $\text{Incl}^1$ as a split injection (which follows from MI2).}
\end{itemize}
\end{defn}
Note that Definition \ref{defn:model} concerns $\Ain$-algebras, not $\Ain$-categories.

\begin{exmp}\label{ex:I}
Let $I$ be the (non-commutative) DG algebra (\ref{cat:I}).\ The $\Ain$-algebra $I\otimes B$, together with $\text{Eval}_k:=p^1_k\otimes\Id_B$ ($k=0,1$) and $\text{Incl}:=i\otimes\Id_B$, is a model for the interval $[0,1]\times B$, over any commutative ring $\mathbb{K}$.
\end{exmp}

\begin{exmp}
If $B$ is strictly unital, the $\Ain$-category $P(B)$ of Remark \ref{diag:approx} (and similarly $\Fun(\overline{[1]},B)$) is not a model for the interval $[0,1]\times B$: in general it has more than one object, hence it is not an $\Ain$-algebra.\ It is instead a path object in the homotopy theory of DG categories, see Remark \ref{rem:notgood}.
\end{exmp}

We define the graded commutative DG algebra $R:=\mathbb{K}[t,dt]=\mathbb{K}[t]\oplus\mathbb{K}[t]\,dt$, with $\deg(t)=0$, $\deg(dt)=1$, $(dt)^2=0$ and differential $d(p(t))=p'(t)\,dt$, $d(q(t)\,dt)=0$ (in particular $d(t)=dt$).\ 
Given an $\Ain$-algebra $B$, the $\Ain$-algebra $B\otimes R$ is defined by
\begin{align*}
m^1_{B\otimes R}(b\otimes r)&:=m^1_B(b)\otimes r+(-1)^{\deg'(b)}\,b\otimes d(r),\\
m^n_{B\otimes R}(b_n\otimes r_n,...,b_1\otimes r_1)&:=(-1)^{\epsilon}\,m^n_B(b_n,...,b_1)\otimes r_1\cdots r_n,\qquad n\ge2,
\end{align*}
where $\epsilon:=\sum_{j=1}^{n}\deg(r_j)\big(\deg'(b_{j+1})+...+\deg'(b_n)\big)$.\ 
We have two morphisms of DG algebras ($k=0,1$):
\begin{align*}
\text{ev}_k\colon R&\to \mathbb{K}\\
p(t)+q(t)dt&\mapsto p(k)\cdot 1_{\mathbb{K}}.
\end{align*} 
They induce strict $\Ain$-morphisms
\begin{align*}
\text{Eval}_k:=\text{Id}_{B}\otimes\text{ev}_k\colon B\otimes R&\to B\otimes\mathbb{K}\simeq B.
\end{align*}
In the same vein, taking the morphism of DG algebras 
\begin{align*}
\text{i}\colon  \mathbb{K} &\to R\\
1_{\mathbb{K}}&\mapsto 1_{\mathbb{K}},
\end{align*} 
we have a strict $\Ain$-morphism
\begin{align*}
\text{Incl}:=\text{Id}_{B}\otimes\text{i}\colon B\otimes \mathbb{K}\simeq B&\to B\otimes R.
\end{align*}

\begin{thm}\label{thm:R}
Assume that $\mathbb{Q}\subset\mathbb{K}$.\ The $\Ain$-algebra $B\otimes R$, together with $\text{Eval}_k$ and $\text{Incl}$, is a model for the interval $[0,1]\times B$ in the sense of Definition \ref{defn:model}.
\end{thm}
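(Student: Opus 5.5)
We check the four conditions MI1)--MI4) of Definition \ref{defn:model} in turn; only MI3) requires the hypothesis $\mathbb{Q}\subset\mathbb{K}$.

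For MI1), note that $\text{ev}_0$, $\text{ev}_1$ and $\text{i}$ are morphisms of DG algebras; $\text{ev}_k(dt)=0$ is consistent with $d$, since $\text{ev}_k(p'(t)dt)=0$. Tensoring a morphism of graded commutative DG algebras with $\text{Id}_B$ gives a strict $\Ain$-morphism. Concretely, one checks that $\text{Id}_B\otimes\phi$ commutes with $m^1_{B\otimes R}$ and with each $m^n_{B\otimes R}$, because $\phi$ is degree-preserving and multiplicative, so the Koszul sign $\epsilon$ is unchanged. For MI2), $\text{ev}_k\circ\text{i}=\Id_{\mathbb{K}}$, hence $\text{Eval}_k\circ\text{Incl}=\Id_B$. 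For MI4), the map $(\text{Eval}_0)^1\oplus(\text{Eval}_1)^1\colon B\otimes R\to B\oplus B$ has the explicit graded-module section
\begin{align*}
(b,b')\mapsto b\otimes(1-t)+b'\otimes t,
\end{align*}
which is $\mathbb{K}$-linear and needs no division. Hence this map is split surjective.

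The main step is MI3). By MI2), once $\text{Incl}^1$ is a cochain homotopy equivalence with homotopy inverse $(\text{Eval}_0)^1$, the map $(\text{Eval}_1)^1$ is one as well. Indeed, $\text{Incl}^1\circ(\text{Eval}_0)^1\simeq\Id$ and $(\text{Eval}_1)^1\circ\text{Incl}^1=\Id$ give $(\text{Eval}_1)^1\simeq(\text{Eval}_0)^1$. So it suffices to construct a degree $-1$ map $H\colon R\to R$ with
\begin{align*}
dH+Hd=\Id_R-\text{i}\circ\text{ev}_0 .
\end{align*}
The natural choice is integration from $0$:
\begin{align*}
H(p(t))=0,\qquad H(q(t)\,dt)=\int_0^t q(s)\,ds .
\end{align*}
This is where $\mathbb{Q}\subset\mathbb{K}$ enters, since $t^n dt\mapsto t^{n+1}/(n+1)$. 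We verify the identity on the two summands of $R$. On $p(t)$ we get $H(p'dt)=p(t)-p(0)$. On $q\,dt$ we get $d\int_0^t q=q\,dt$, and $\text{ev}_0(q\,dt)=0$.

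Finally, we transport $H$ to $B\otimes R$ by setting $\mathcal{H}(b\otimes r):=\pm\, b\otimes H(r)$, with the sign $(-1)^{\deg'(b)}$ chosen to match the sign in $m^1_{B\otimes R}$. One checks that $m^1_{B\otimes R}\mathcal{H}+\mathcal{H}m^1_{B\otimes R}=\Id-\text{Incl}^1\circ(\text{Eval}_0)^1$. The terms involving $m^1_B$ cancel because $\mathcal{H}$ anticommutes with $m^1_B\otimes\Id$, and the remaining terms reproduce the identity on $R$. Getting these signs right is the only delicate bookkeeping point. Note also that $B\otimes R$ must be taken as the algebraic tensor product, not a completion, so that $H$ is well defined on it.
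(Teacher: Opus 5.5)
Your proposal is correct and follows essentially the same route as the paper: the same DG-algebra maps $\text{ev}_k$ and $\text{i}$ for MI1--MI2, the same graded section $(b,b')\mapsto b\otimes(1-t)+b'\otimes t$ for MI4, and the same integration homotopy for MI3. The paper uses both $s_0$ and $s_1=\int_1^t$ to treat $\text{Eval}_1$ directly; you instead deduce it from $(\text{Eval}_1)^1\simeq(\text{Eval}_0)^1$, and you make explicit the sign $(-1)^{\deg'(b)}$ for transporting the homotopy to $B\otimes R$, which the paper leaves implicit.
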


\begin{proof}
The map $\text{ev}_k$ is multiplicative and unital, and it is a map of complexes because $\text{ev}_k(d\,p(t))=\text{ev}_k(p'(t)dt)=0$.\ Since $\text{ev}_k$ and $\text{i}$ are morphisms of DG algebras, $\text{Eval}_k$ and $\text{Incl}$ are strict $\Ain$-morphisms (MI1), and $\text{ev}_k\circ\text{i}=\Id_{\mathbb{K}}$ gives MI2.\ 
For MI3, since $\mathbb{Q}\subset\mathbb{K}$ we can define the degree $-1$ maps
\begin{align*}
s_k\colon R\to R,\qquad s_k(p(t)):=0,\qquad s_k(q(t)dt):=\int_k^t q(\tau)d\tau,\qquad k=0,1,
\end{align*}
and we have $d\circ s_k+s_k\circ d=\Id_R-\text{i}\circ\text{ev}_k$: on $p(t)$ both sides give $p(t)-p(k)$, on $q(t)dt$ both sides give $q(t)dt$.\ 
Hence $\text{i}$ and $\text{ev}_k$ are mutually inverse homotopy equivalences, and so are $\text{Incl}^1$ and $(\text{Eval}_k)^1$.\ 
For MI4, the $\mathbb{K}$-linear map $(a,b)\mapsto a+(b-a)t$ is a section of $(\text{ev}_0,\text{ev}_1)\colon R\to\mathbb{K}\oplus\mathbb{K}$, and tensoring with $B$ we get a section of $(\text{Eval}_0)^1\oplus(\text{Eval}_1)^1$.
\end{proof}

\begin{defn}\label{larks}
Let $\mathfrak{C}_B$ be a model for the interval $[0,1]\times B$.\ Two $\Ain$-morphisms $f,g\colon A\to B$ are \emph{homotopic via $\mathfrak{C}_B$}, in formula $f\sim_{\mathfrak{C}_B}g$, if there is an $\Ain$-morphism 
\begin{align*}
h\colon A\to \mathfrak{C}_B
\end{align*}
such that $\text{Eval}_0\circ h=f$ and $\text{Eval}_1\circ h=g$; we call $h$ a \emph{homotopy} between $f$ and $g$.
\end{defn}

\begin{thm}\label{thm:indep}
Given $\mathfrak{C}_B$ and $\mathfrak{C}'_B$ two models for the interval $[0,1]\times B$, 
we have $f\sim_{\mathfrak{C}_B}g$ if and only if $f\sim_{\mathfrak{C}'_B}g$.
\end{thm}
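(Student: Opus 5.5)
The plan is to reduce the theorem to the existence of a comparison $\Ain$-morphism $\Phi\colon\mathfrak{C}_B\to\mathfrak{C}'_B$ satisfying $\text{Eval}'_k\circ\Phi=\text{Eval}_k$ for $k=0,1$ (strictly, as composites of $\Ain$-morphisms). This is the same strategy as the proof of independence in \cite[Theorem 4.2.34 and Lemma 4.2.36]{FOOO1}. Given such a $\Phi$ and a homotopy $h\colon A\to\mathfrak{C}_B$ between $f$ and $g$, the composite $\Phi\circ h$ is a homotopy via $\mathfrak{C}'_B$. Since $\text{Eval}'_k$ are strict, we get $\text{Eval}'_k\circ(\Phi\circ h)=(\text{Eval}'_k\circ\Phi)\circ h=\text{Eval}_k\circ h$. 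By symmetry, the converse direction follows from the same construction with the roles of $\mathfrak{C}_B$ and $\mathfrak{C}'_B$ exchanged.

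To build $\Phi$, I will use the additional requirement $\Phi\circ\text{Incl}=\text{Incl}'$ to fix $\Phi$ on the image of $B$, and construct the components $\Phi^n$ inductively on $n$. Set $J:=\ker(\text{Eval}'_0\times\text{Eval}'_1)\subset\mathfrak{C}'_B$. By MI4 this is a subcomplex and a graded direct summand, and it is contractible. Indeed, $\text{Eval}'_k$ is a homotopy equivalence with homotopy inverse $\text{Incl}'$, so $B\to B\times B$, $b\mapsto(b,b)$, composed with $\text{Incl}'$, shows that $(\text{Eval}'_0,\text{Eval}'_1)$ induces a surjection in cohomology. A little more is needed here: the split short exact sequence $J\to\mathfrak{C}'_B\to B\oplus B$ together with the equivalence $\mathfrak{C}'_B\simeq B$ is not by itself enough. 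So I will instead argue that $J$ is acyclic and, since everything is split, contractible as a complex of graded $\mathbb{K}$-modules. Let $Q_n$ be the cokernel of $\text{Incl}^{\otimes n}\colon B^{\otimes n}\to\mathfrak{C}_B^{\otimes n}$. Since $\text{Incl}^1$ is split injective (MI2) and a homotopy equivalence (MI3), $Q_n$ is a split complement and is contractible. Then $Hom(Q_n,J)$ is contractible, and so is $Hom(Q_n,C)$ for any $C$.

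The inductive step runs as follows. Start with $\Phi^1:=\text{Incl}'\circ\text{Eval}_0$ corrected so that $\text{Eval}'_1\circ\Phi^1=\text{Eval}_1$. Concretely, use the section $\sigma$ of MI4 and put $\Phi^1=\sigma\circ(\text{Eval}_0,\text{Eval}_1)+\psi$, where $\psi$ takes values in $J$ and makes $\Phi^1$ a chain map extending $\text{Incl}'$ on $\text{Incl}(B)$. Now suppose $\Phi^1,\dots,\Phi^{n-1}$ satisfy $(\star_k)$ for $k<n$, restrict to $\text{Incl}'$ along $\text{Incl}$, and satisfy $\text{Eval}'_j\circ\Phi^k=\delta_{k1}\text{Eval}_j$. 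We need $\Phi^n$ with $d(\Phi^n)=\text{Err}_n(\Phi)$. The cycle $\text{Err}_n(\Phi)$ has the following properties. It vanishes on $B^{\otimes n}$, because $\text{Incl},\text{Incl}'$ are strict and $\Phi$ restricts to $\text{Incl}'$ there, so the error is that of a strict morphism, namely zero for $n\ge2$. It also takes values in $J$, because the $\text{Eval}'_j$ are strict and $\text{Eval}'_j\circ\Phi$ agrees with the strict $\text{Eval}_j$ in arities $<n$, so $\text{Eval}'_j\circ\text{Err}_n(\Phi)=\text{Err}_n(\text{Eval}_j)=0$. Hence $\text{Err}_n(\Phi)$ is a cycle of $Hom(Q_n,J)$. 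By contractibility it equals $d(\Phi^n)$ for some $\Phi^n\in Hom(Q_n,J)$, and this $\Phi^n$ preserves all the inductive conditions.

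The main obstacle is the homological input: proving that $J$ and $Q_n$ are contractible over a general commutative ring, using only MI2–MI4. For $J$ I would show acyclicity via the long exact sequence. The map $H(\mathfrak{C}'_B)\to H(B)\oplus H(B)$ is $x\mapsto(x,x)$ after identifying via $\text{Eval}'_0$, because $\text{Eval}'_0$ and $\text{Eval}'_1$ are both inverse to $\text{Incl}'$ in cohomology. This map is injective, so $H(J)\cong H(B)[-1]$, which is not zero. This suggests the correct object is not $Hom(Q_n,J)$ alone but the relative setup. I would therefore first try to use that $Q_n$ is contractible, so $Hom(Q_n,J)$ is contractible regardless of $J$, since $Hom$ from a contractible complex is contractible. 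The contractibility of $Q_n$ follows because $\text{Incl}^{\otimes n}$ is a split injective homotopy equivalence, being a tensor power of one. The care needed is exactly in checking this splitting and homotopy-equivalence argument over $\mathbb{K}$, without any flatness assumptions.
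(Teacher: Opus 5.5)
Your route is the paper's: Theorem~\ref{thm:indep} is deduced from Proposition~\ref{prop:comparison} with $f=\Id_B$. The comparison morphism is then built inductively from the contractibility of $Hom(Q_n,J)$, which holds because $Q_n$ is contractible. For $n\ge 2$ your homogeneous step (the error term is a cycle of $Hom(Q_n,J)$) is exactly the paper's step with $G=0$, which is legitimate because $\Id_B$ is strict.

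Two points need fixing. First, drop the claim in your second paragraph that $J$ is contractible, together with the claim that $(\text{Eval}'_0,\text{Eval}'_1)$ is surjective in cohomology. Both are false in general, as you later observe ($H(J)\cong H(B)[-1]$), and neither is used.

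Second, $n=1$ is the only case where the constraints are inhomogeneous, and there you only assert that a suitable $\psi$ exists. It is obtained as in the paper. Let $E':=(\text{Eval}'_0)^1\oplus(\text{Eval}'_1)^1$ and $\Delta(b)=(b,b)$, and put $G:=\sigma\circ((\text{Eval}_0)^1,(\text{Eval}_1)^1)-(\sigma\circ\Delta-(\text{Incl}')^1)\circ(\text{Eval}_0)^1$. By MI2 for both models, $E'\circ G=((\text{Eval}_0)^1,(\text{Eval}_1)^1)$ and $G\circ\text{Incl}^1=(\text{Incl}')^1$. Both right-hand sides are chain maps, so $d(G)$ is a cycle of $Hom(Q_1,J)$. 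Hence $d(G)=d(X)$ for some $X\in Hom(Q_1,J)$, and $\Phi^1:=G-X$ is a chain map with all the required properties. Your $\psi$ is then $-(\sigma\circ\Delta-(\text{Incl}')^1)\circ(\text{Eval}_0)^1-X$, which indeed takes values in $J$.
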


Theorem \ref{thm:indep} follows from the following Proposition \ref{prop:comparison}, which is the unfiltered version of \cite[Theorem 4.2.34]{FOOO1}.\ Indeed, applying it to $f=\Id_B$, we get an $\Ain$-morphism $\mathfrak{F}\colon\mathfrak{C}_B\to\mathfrak{C}'_B$ with $\text{Eval}_s\circ\mathfrak{F}=\text{Eval}_s$; if $h$ is a homotopy via $\mathfrak{C}_B$ between $f$ and $g$, then $\mathfrak{F}\circ h$ is a homotopy via $\mathfrak{C}'_B$ between $f$ and $g$, cf.\ \cite[Lemma 4.2.36]{FOOO1}.

\begin{prop}\label{prop:comparison}
Let $\mathbb{K}$ be a commutative ring, let $C_1$, $C_2$ be two $\Ain$-algebras, let $\mathfrak{C}_1$ and $\mathfrak{C}_2$ be models for the interval $[0,1]\times C_1$ and $[0,1]\times C_2$, respectively, and let $f\colon C_1\to C_2$ be an $\Ain$-morphism.\ Then there exists an $\Ain$-morphism $\mathfrak{F}\colon\mathfrak{C}_1\to\mathfrak{C}_2$ such that
\begin{align}\label{eq:compatible}
\text{Eval}_s\circ\mathfrak{F}=f\circ\text{Eval}_s\quad (s=0,1),\qquad \mathfrak{F}\circ\text{Incl}=\text{Incl}\circ f .
\end{align}
\end{prop}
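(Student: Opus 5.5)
We construct the components $\mathfrak{F}^n\in Hom(\mathfrak{C}_1^{\otimes n},\mathfrak{C}_2)$ of degree $1-n$ by induction on $n$, imposing at each step the linear boundary conditions
\begin{align*}
(\text{Eval}_s)^1\circ\mathfrak{F}^n=f^n\circ\big((\text{Eval}_s)^1\big)^{\otimes n}\quad (s=0,1),\qquad \mathfrak{F}^n\circ(\text{Incl}^1)^{\otimes n}=\text{Incl}^1\circ f^n .
\end{align*}
Since $\text{Eval}_s$ and $\text{Incl}$ are strict by MI1, these conditions for all $n$ are exactly (\ref{eq:compatible}). To organize the conditions, let $J:=\ker\big((\text{Eval}_0)^1\oplus(\text{Eval}_1)^1\big)\subset\mathfrak{C}_2$. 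This is a subcomplex and, by MI4, a direct summand as a graded module. Let also $Q_n:=\text{coker}\big((\text{Incl}^1)^{\otimes n}\colon C_1^{\otimes n}\to\mathfrak{C}_1^{\otimes n}\big)$. Because $\text{Incl}^1$ is split injective as a graded map (by MI2), $(\text{Incl}^1)^{\otimes n}$ is split injective too, so $\mathfrak{C}_1^{\otimes n}\cong C_1^{\otimes n}\oplus Q_n$ as graded modules.

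First we produce some $\Phi_n$ satisfying the boundary conditions, with no differential condition yet. Choose a graded section $\sigma$ of $\text{Eval}_0\times\text{Eval}_1$ in $\mathfrak{C}_2$. On $C_1^{\otimes n}$ the value of $\Phi_n$ is forced to be $\text{Incl}^1\circ f^n$. On a complement we set $\Phi_n:=\sigma\circ\big(f^n(\text{Eval}_0^{\otimes n}),f^n(\text{Eval}_1^{\otimes n})\big)$. These two prescriptions are compatible by MI2, since $\text{Eval}_s\circ\text{Incl}=\Id$. Any other admissible choice differs from $\Phi_n$ by an element of $Hom(Q_n,J)$, viewed inside $Hom(\mathfrak{C}_1^{\otimes n},\mathfrak{C}_2)$ as the maps that vanish on the image of $C_1^{\otimes n}$ and take values in $J$.

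Next comes the inductive step. Assume $\mathfrak{F}^1,\dots,\mathfrak{F}^{n-1}$ satisfy $(\star_k)$ for $k<n$ and the boundary conditions. Then $\text{Err}_n(\mathfrak{F})$ is a cycle. We want $\mathfrak{F}^n=\Phi_n+\psi$ with $\psi\in Hom(Q_n,J)$ and $d(\mathfrak{F}^n)=\text{Err}_n(\mathfrak{F})$.
\begin{itemize}
\item Because the $\text{Eval}_s$ and $\text{Incl}$ are strict $\Ain$-morphisms and $f$ satisfies $(\star_n)$, the element $c:=\text{Err}_n(\mathfrak{F})-d(\Phi_n)$ satisfies $(\text{Eval}_s)^1\circ c=0$ and $c\circ(\text{Incl}^1)^{\otimes n}=0$. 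Concretely, we compose (\ref{eq:starn-d}) with the strict maps and use the boundary conditions at lower arities.
\item So $c$ is a cycle of the subcomplex $Hom(Q_n,J)$. It therefore suffices to show that $Hom(Q_n,J)$ is acyclic, and then take $\psi$ with $d\psi=c$.
\end{itemize}
For $n=1$, the same argument with $\text{Err}_1=0$ asks for $\mathfrak{F}^1$ to be a chain map; the obstruction is $-d(\Phi_1)$, which again lies in $Hom(Q_1,J)$.

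The main obstacle is the contractibility of $Hom(Q_n,J)$ over an arbitrary ring. The plan is to show that both $J$ and $Q_n$ are contractible complexes; then the Hom complex is contractible. For $J$: $\text{Eval}_0^1$ is a homotopy equivalence by MI3, and $\text{Eval}_0^1\oplus\text{Eval}_1^1$ is split surjective by MI4. This gives a short exact sequence $0\to J\to\mathfrak{C}_2\to C_2\oplus C_2\to0$, graded split, which yields a long exact sequence in homology. Homology alone is not enough over $\mathbb{K}$, though. I would instead argue via mapping cones: $J$ is homotopy equivalent to the fiber of $\mathfrak{C}_2\to C_2\oplus C_2$, which is equivalent to the fiber of the diagonal $C_2\to C_2\oplus C_2$. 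Here we use $\text{Eval}_s\circ\text{Incl}=\Id$ and the fact that $\text{Incl}^1$ is a homotopy equivalence. That fiber is homotopy equivalent to $C_2[-1]$, so $J$ is not contractible and this approach fails.

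The fix must therefore come from $Q_n$. Since $\text{Incl}^1$ is a split injective homotopy equivalence, its tensor power is as well, so $Q_n$, being a graded-split cokernel of a homotopy equivalence, is contractible. A graded-split short exact sequence whose first map is a homotopy equivalence has a contractible quotient, by the cone argument. Hence $Hom(Q_n,J)$ is contractible because $Q_n$ is, and the induction proceeds.
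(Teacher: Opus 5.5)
Your proposal is correct and is essentially the paper's proof. Your $\Phi_n$ is the paper's $G=G_0-(G_0\circ i_n-\text{Incl}^1\circ f^n)\circ r_n$, namely your construction with the complement $\ker r_n$; the obstruction $\text{Err}_n-d(\Phi_n)$ lands in $Hom(Q_n,J)$ by the same strictness argument; and that complex is contractible by the same observation, that $Q_n$ is the graded-split cokernel of the homotopy equivalence $(\text{Incl}^1)^{\otimes n}$. The detour through $J$ is unnecessary, but you are right to discard it, since $J\simeq C_2[-1]$ in general.
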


We give a proof of Proposition \ref{prop:comparison} in Section \ref{sec:homOp} which is a bit different from the argument of Fukaya-Oh-Ohta-Ono.

\begin{rem}
The argument of \cite[\S 4.4]{FOOO1} works over a ring and does not use any model structure: it is an induction on the number filtration which uses exactly the split injectivity of $\text{Incl}^1$ and the split surjectivity of $(\text{Eval}_0)^1\oplus(\text{Eval}_1)^1$, see \cite[Lemma 4.4.2 and Lemma 4.4.3]{FOOO1}.\ 
Over a field, Theorem \ref{thm:indep} is also an instance of the standard fact that right homotopies can be computed with any good path object, see Proposition \ref{prop:path} and Proposition \ref{prop:indep} below.
\end{rem}

If $B$ is strictly unital and $f\sim g$, then it is well known that $f\approx g$, see for example \cite[Lemma 2.5]{Sei}, \cite[Lemma 2.7]{Orn1}, \cite[Lemma 13.14]{FukeX} or \cite[Corollary 1.23]{COS2}.\ 
The converse is false in general.\ The following example is due to Fukaya, cf.\ \cite[Example 13.15]{FukeX}.

\begin{exmp}\label{ex:fukaya}
Let $C$ be a unital associative $\mathbb{K}$-algebra, regarded as a DG algebra concentrated in degree $0$ (so $m^1_C=0$ and $m^2_C(x_2,x_1)=x_2x_1$).\ We take $A=B=C$ and two unital $\mathbb{K}$-algebra homomorphisms $f,g\colon C\to C$, regarded as strict $\Ain$-morphisms.\ 
First, we note that $f\sim g$ if and only if $f=g$: since $C$ is concentrated in degree $0$, a prenatural transformation $T$ of degree $0$ with $T^0=0$ has $T^n=0$ for every $n$ ($T^n$ has degree $-n$), hence $\mathfrak{M}^1(T)=0$.\ 
On the other hand, $f\approx g$ if and only if there exists an invertible element $c\in C$ such that $f(x)=c^{-1}g(x)c$ for every $x\in C$.\ 
In this case, we take 
\begin{align*}
S:=(S^0=c ,S^{n>0}=0)\colon f&\to g, \quad T:=(T^0=c^{-1} ,T^{n>0}=0)\colon g\to f,\quad W=W'=0.
\end{align*}
Then $S$ and $T$ are natural: the only components of $\mathfrak{M}^1(S)$ and $\mathfrak{M}^1(T)$ which could be non zero are
\begin{align*}
\mathfrak{M}^1(S)^1(x)=m^2_C(g(x),c)-m^2_C(c,f(x))=g(x)c-cf(x)=0,\qquad \mathfrak{M}^1(T)^1(x)=f(x)c^{-1}-c^{-1}g(x)=0,
\end{align*}
and $\mathfrak{M}^2(S,T)=\Id_{g}$, $\mathfrak{M}^2(T,S)=\Id_{f}$.\ 
Taking for instance $g=\Id_C$ and $f(x)=c^{-1}xc$, we get $f\approx g$ and $f\not\sim g$.\\
On the other hand, if $f\approx g$ then by \cite[Lemma 2.8]{Gen} (or \cite[Lemma 2.10]{COS1}) we have a DG functor:
\begin{align*}
\psi\colon C&\to P(C)\\
\bullet&\mapsto \psi^0(\bullet):=(\bullet,\bullet,c)\\
x&\mapsto \psi^1(x):=\Bigl(\:\begin{matrix}
f(x) & 0\\
0 & g(x)
\end{matrix}\:\Bigr)
\end{align*}
such that
\begin{equation*}
\xymatrix{
&C&\\
C\ar[ur]^{f}\ar[dr]_{g}\ar[r]^-{\psi}&{P(C)}\ar[u]_s^{\simeq}\ar[d]^t_{\simeq}&\ar@{=}[ul]\ar@{=}[dl] C\ar[l]_-{p}^-{\simeq}\\
&C&
}
\end{equation*}
Since $s\circ\psi=f$ and $t\circ\psi=g$, $f$ and $g$ are right homotopic through the path object $P(C)$ of the Tabuada model structure (see \cite[\S 3]{Tab2} and Remark \ref{rem:notgood}), but $f\not\sim g$ since $f\neq g$.\ 
There is no contradiction with Theorem \ref{thm:indep}: $P(C)$ is a DG category with many objects, not a model for the interval in the sense of Definition \ref{defn:model}.\ 
The reason for this discrepancy is explained in Remark \ref{rem:notgood}: $I\otimes C$ is a path object for $C$ also in the Tabuada model structure, but not a good one.\ 
On the other hand $P(C)$ is a good path object.
\end{exmp}

\subsection{Elementary homotopic morphisms of 2-colored DG operads}\label{subsec:elemhomot}

An $\Ain$-algebra can also be seen as an algebra over the DG operad $\Ain$, see \cite{LoVa}.\ 
There is a 2-colored version of the operad $\Ain$, denoted by $A^2_\infty$, whose algebras are the $\Ain$-morphisms which is well explained in \cite[\S 4]{LAM}.\ 
We fix two colors: ``blue'' ($b$) and ``red'' ($r$).\ Now we describe the DG operads $A^2_\infty$ and $Hom(V,W)$.\ $A^2_{\infty}$ is defined as follows:
\begin{itemize}
\item[CO1)] Given $n>0$ and $i_1,...,i_n,o\in\{\text{``blue''},\text{``red''}\}$, $A^2_\infty(i_1,...,i_n;o)$ is a graded $\mathbb{K}$-module.\ As a graded 2-colored operad, $A^2_\infty$ is freely generated by the corollas
\begin{equation}\label{eq:corollas}
\corolla{2}{opblue}{opblue},\ \corolla{3}{opblue}{opblue},\ \corolla{4}{opblue}{opblue},\ \dots\qquad
\corolla{2}{opred}{opred},\ \corolla{3}{opred}{opred},\ \corolla{4}{opred}{opred},\ \dots\qquad
\corolla{1}{opblue}{opred}\,,\ \corolla{2}{opblue}{opred},\ \corolla{3}{opblue}{opred},\ \dots
\end{equation}
namely: the blue corollas $\T^{b^{\times n}}_b$ and the red corollas $\T^{r^{\times n}}_r$, with $n\ge 2$ leaves, of shifted degree $1$ (i.e.\ of degree $2-n$, as $m^n$), and the mixed corollas $\T^{b^{\times n}}_r$, with $n\ge 1$ blue leaves and red root, of shifted degree $0$ (i.e.\ of degree $1-n$, as $f^n$):
\begin{align*}
\T^{b^{\times n}}_b=\corollagen{n}{opblue}{opblue},\qquad \T^{r^{\times n}}_r=\corollagen{n}{opred}{opred},\qquad \T^{b^{\times n}}_r=\corollagen{n}{opblue}{opred}.
\end{align*}
The elements of $A^2_\infty(i_1,...,i_n;o)$ are linear combinations of the trees obtained by grafting these corollas, whose leaves are colored by $i_1,...,i_n$ and whose root is colored by $o$.
\item[CO2)] The composition
\begin{align*}
\circ\colon A^2_\infty(o_1,...,o_n;o)\otimes A^2_{\infty}(\underline{i}_1;o_1)\otimes...\otimes A^2_{\infty}(\underline{i}_n;o_n)&\to A^2_{\infty}(\underline{i}_1,...,\underline{i}_n;o)\\
\gamma\otimes \gamma_{1}\otimes...\otimes \gamma_{n}&\mapsto \circ(\gamma;\gamma_{1},...,\gamma_{n}),
\end{align*}
where $\underline{i}_1,...,\underline{i}_n$ are sequences of colors, is given by grafting the root of the tree $\gamma_{k}$ on the $k$-th leaf of the tree $\gamma$, counted from the left.\ 
We write $\gamma\circ_k\gamma'$ for the tree obtained by grafting the root of $\gamma'$ on the $k$-th leaf of $\gamma$, counted from the right (so that $k-1$ leaves of $\gamma$ lie on the right of $\gamma'$, as the inputs $a_{k-1},...,a_1$ in (\ref{eq:Ainf-cat})).
\item[CO3)] The differential 
\begin{align*}
d_{A^2_\infty}\colon A^2_\infty(i_1,...,i_{n};o)\to  A^2_\infty(i_1,...,i_{n};o)
\end{align*}
is the derivation which, on the generators, is given by the equations (\ref{eq:Ainf-cat}) and (\ref{eq:Ainf-fun}) (without the terms containing $m^1$, which give the differential of the Hom complexes), cf.\ 
\cite[\S 4.4.1]{LAM}.\ Namely, for the blue corollas
\begin{align}\label{eq:dblue}
d_{A^2_\infty}\big(\T^{b^{\times n}}_b\big)=-\sum_{j=2}^{n-1}\ \sum_{k=0}^{n-j}\T^{b^{\times(n-j+1)}}_b\circ_{k+1}\T^{b^{\times j}}_b,
\qquad\text{i.e.}\qquad
d_{A^2_\infty}\Big(\corollagen{n}{opblue}{opblue}\Big)=-\sum_{j=2}^{n-1}\ \sum_{k=0}^{n-j}\corollagen{n-j+1}{opblue}{opblue}\circ_{k+1}\corollagen{j}{opblue}{opblue},
\end{align}
the same formula holds for the red corollas $\T^{r^{\times n}}_r=\corollagen{n}{opred}{opred}$, and for the mixed corollas
\begin{equation}\label{eq:dmixed}
\begin{split}
d_{A^2_\infty}\big(\T^{b^{\times n}}_r\big)&=\sum_{j=2}^{n}\ \sum_{k=0}^{n-j}\T^{b^{\times(n-j+1)}}_r\circ_{k+1}\T^{b^{\times j}}_b-\sum_{p=2}^{n}\ \sum_{\substack{s_1+...+s_p=n\\ s_1,...,s_p\ge 1}}\T^{r^{\times p}}_r\circ\big(\T^{b^{\times s_p}}_r,...,\T^{b^{\times s_1}}_r\big),\\
\text{i.e.}\quad d_{A^2_\infty}\Big(\corollagen{n}{opblue}{opred}\Big)&=\sum_{j=2}^{n}\ \sum_{k=0}^{n-j}\corollagen{n-j+1}{opblue}{opred}\circ_{k+1}\corollagen{j}{opblue}{opblue}-\sum_{p=2}^{n}\ \sum_{\substack{s_1+...+s_p=n\\ s_1,...,s_p\ge 1}}\corollagen{p}{opred}{opred}\circ\Big(\corollagen{s_p}{opblue}{opred},...,\corollagen{s_1}{opblue}{opred}\Big).
\end{split}
\end{equation}
Here the $s_i$ can be equal to $1$: the corolla $\T^{b}_r$ corresponds to the linear part $f^1$ of an $\Ain$-morphism; for instance $d_{A^2_\infty}(\T^{b,b}_r)=\T^{b}_r\circ\T^{b,b}_b-\T^{r,r}_r\circ(\T^{b}_r,\T^{b}_r)$, which corresponds to $f^1(m^2_A(a_2,a_1))-m^2_B(f^1(a_2),f^1(a_1))$, i.e.\ to the right hand side of (\ref{eq:starn-d}) for $n=2$.\ In particular $d_{A^2_\infty}$ vanishes on $\T^{b,b}_b$, $\T^{r,r}_r$ and $\T^{b}_r$.
\end{itemize}
In the same vein, given two chain complexes $(V,d_V)$ and $(W,d_W)$, there is a 2-colored DG operad $Hom(V,W)$ defined as follows:
\begin{itemize}
\item[CO1)] Given $n+1$ colors $i_1,...,i_n,o$ we define
\begin{align*}
Hom(V,W)(i_1,...,i_n;o):=\mbox{Hom}_{\mathbb{K}}(\mathbb{V}^{i_1}\otimes...\otimes \mathbb{V}^{i_n},\mathbb{V}^{o}),
\qquad\text{where}\qquad
\mathbb{V}^k=
\begin{cases}
V, & \text{if $k$ is ``blue''},\\
W, & \text{if $k$ is ``red''},
\end{cases}
\end{align*}
is the complex of graded $\mathbb{K}$-linear maps, graded by the shifted degree: a map $\phi\colon\mathbb{V}^{i_1}\otimes...\otimes\mathbb{V}^{i_n}\to\mathbb{V}^{o}$ of degree $\deg(\phi)$ has shifted degree $\deg'(\phi):=\deg(\phi)+n-1$.
\item[CO2)] The composition is the composition of multilinear maps, $\circ(\phi;\phi_{1},...,\phi_{n}):=\phi\circ(\phi_{1}\otimes...\otimes\phi_{n})$ whenever the colors match, with the Koszul sign rule.\ In particular
\begin{align*}
(\phi\circ_k\chi)(x_n,...,x_1)=(-1)^{\deg'(\chi)(\deg'(x_{k-1})+...+\deg'(x_1))}\,\phi\big(x_n,...,x_{k+m},\chi(x_{k+m-1},...,x_{k}),x_{k-1},...,x_1\big),
\end{align*}
where $\chi$ has $m$ inputs.\ Since $m^n$ has shifted degree $1$ and $\Ff^n$ has shifted degree $0$, the signs $(-1)^{\dagger_k}$ in (\ref{eq:Ainf-cat}) and (\ref{eq:Ainf-fun}) are Koszul signs.
\item[CO3)] The differential is the differential of the Hom complex:
\begin{align*}
d_{V,W}(\phi):=d\circ\phi-(-1)^{\deg'(\phi)}\sum_{k=1}^{n}\phi\circ_k d,
\end{align*}
where $d$ denotes $d_V$ or $d_W$, according to the color of the corresponding input or output.\ For $V=A$ and $W=C$ with $d_V=m^1_A$, $d_W=m^1_C$, this is the differential $d_{Hom(A^{\otimes n},C)}$ of Section \ref{sec:background}.
\end{itemize}
If $R$ is a graded commutative DG algebra and $\mathscr{P}$ is a 2-colored DG operad, $\mathscr{P}\otimes R$ is the 2-colored DG operad given by
\begin{align*}
(\phi\otimes r)\circ_k(\chi\otimes s):=(-1)^{\deg(r)\deg'(\chi)}(\phi\circ_k\chi)\otimes rs,\qquad d(\phi\otimes r):=d_{V,W}(\phi)\otimes r+(-1)^{\deg'(\phi)}\phi\otimes d_R(r).
\end{align*}
A DG $A^2_\infty$-algebra is a morphism of 2-colored DG operads
\begin{align}
\rho\colon A^2_\infty\to Hom(V,W).
\end{align}
Roughly speaking, for every color profile $(i_1,...,i_n;o)$ we have chain maps
\begin{align*}
\rho(i_1,...,i_n;o)\colon A^2_\infty(i_1,...,i_n;o)\to Hom(V,W)\big( i_1,...,i_n;o \big)
\end{align*}
preserving the operadic compositions.\ 
More explicitly, a DG $A^2_\infty$-algebra is given by the datum:
\begin{itemize}
\item[1)] two $\Ain$-algebras $(A,m^n_A)$ and $(B,m^n_B)$, with $(A,m^1_A)=(V,d_V)$ and $(B,m^1_B)=(W,d_W)$,
\item[2)] an $\Ain$-morphism $f\colon A\to B$,
\end{itemize}
namely $m^n_A=\rho(b,...,b;b)(\T^{b^{\times n}}_b)$ and $m^n_B=\rho(r,...,r;r)(\T^{r^{\times n}}_r)$ for $n\ge2$, and $f^n=\rho(b,...,b;r)(\T^{b^{\times n}}_r)$ for $n\ge1$.\ 
Given an $\Ain$-morphism $f\colon A\to B$, we denote by $\rho_f\colon A^2_\infty\to Hom(A,B)$ the corresponding morphism of 2-colored DG operads.\ 
A detailed treatment of the 2-colored DG operad $A^2_\infty$ can be found in \cite[\S 4.4.1]{LAM}.\\

Now we can easily adapt the definition of \emph{elementary homotopic} morphisms of DG operads by Markl, Shnider and Stasheff \cite[\S 3.10]{MSS} to morphisms of 2-colored DG operads.

\begin{defn}[Elementary homotopic]
Let $\rho,\rho'\colon \mathscr{O}\to \mathscr{P}$ be morphisms of (2-colored) DG operads.\ We say that $\rho$ and $\rho'$ are \emph{elementary homotopic} if there exists a morphism of (2-colored) DG operads $\tau\colon\mathscr{O}\to\mathscr{P}\otimes R$, with $R=\mathbb{K}[t,dt]$ (defined in Subsection \ref{susec:interval}), fitting the diagram
\begin{align}\label{diag:homoper}
\xymatrix{
&&\mathscr{P}\\
\mathscr{O}\ar[rr]^-{\tau}\ar[rru]^-{\rho}\ar[rrd]_-{\rho'}&&\mathscr{P}\otimes R\ar[u]_{\Id\otimes\text{ev}_0}\ar[d]^{\Id\otimes\text{ev}_1}\\
&&\mathscr{P}
}
\end{align}
cf.\ \cite[Definition 3.121]{MSS}.
\end{defn}

\begin{defn}
Given two $\Ain$-morphisms $f,g\colon A\to B$ we write $f\asymp g$ if $f$ and $g$, considered as morphisms of 2-colored DG operads $A^2_\infty\to Hom(A,B)$, are elementary homotopic.\ 
We say that an elementary homotopy $\tau\colon A^2_\infty\to Hom(A,B)\otimes R$ between $f$ and $g$ is \emph{constant on the monochromatic parts} if
\begin{align}\label{eq:constant}
\tau(b,...,b;b)(\T^{b^{\times n}}_b)=m^n_A\otimes 1_R,\qquad \tau(r,...,r;r)(\T^{r^{\times n}}_r)=m^n_B\otimes 1_R,\qquad n\ge 2,
\end{align}
and we write $f\asymp^{c}g$ if $f$ and $g$ are elementary homotopic via such a $\tau$.\ Clearly $f\asymp^{c}g$ implies $f\asymp g$.
\end{defn}

\section{Path objects and right homotopies of $\Ain$-algebras}\label{sec:path}

In this section, unless otherwise stated, $\mathbb{K}$ is a field and $\mbox{Alg}_{\infty}$ denotes the category of (non unital) $\Ain$-algebras over $\mathbb{K}$ with (non strict) $\Ain$-morphisms.

\begin{rem}\label{rem:cyl}
The category $\mbox{Alg}_{\infty}$ has finite products, but it does not admit colimits in general: neither coequalizers \cite{Orn6} nor coproducts, see \cite[\S 4.1]{Val}.\ 
Therefore the cylinder objects of Quillen's theory, i.e.\ the factorizations of the codiagonal $A\sqcup A\to A$, are not available in $\mbox{Alg}_{\infty}$, while the path objects, i.e.\ the factorizations of the diagonal $\Delta\colon B\to B\times B$, are.\ 
For this reason, for $\Ain$-algebras the natural notion is the \emph{right} homotopy.
\end{rem}

\begin{thm}[{\cite[Th\'eor\`eme 1.3.3.1]{LH}, \cite[Theorem 4.2]{Val}}]\label{thm:LH}
Consider the following classes of morphisms in $\mbox{Alg}_{\infty}$: 
\begin{itemize}
\item the \emph{weak equivalences} are the quasi-isomorphisms, 
\item the \emph{cofibrations} are the $f$ such that $f^1$ is injective 
\item the \emph{fibrations} are the $f$ such that $f^1$ is surjective.\ 
\end{itemize}
These classes satisfy the axioms (MC2)-(MC5) of a model category, $\mbox{Alg}_{\infty}$ has finite products and pullbacks along fibrations, and every $\Ain$-algebra is fibrant and cofibrant.\ 
In particular $\mbox{Alg}_{\infty}$ is a category of fibrant objects in the sense of Brown \cite{Bro}, cf.\ \cite[\S 4]{Val}.
\end{thm}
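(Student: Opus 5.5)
The statement is the theorem of Lef\`evre-Hasegawa and Vallette, so the plan is to reduce it to the bar/cobar description of $\Ain$-morphisms and then check the axioms one at a time. Over a field, an $\Ain$-algebra $A$ is a square-zero codifferential on the reduced tensor coalgebra $T^c(A[1])$. An $\Ain$-morphism $f$ is a morphism of DG coalgebras $T^c(A[1])\to T^c(B[1])$, determined by its components $f^n$.

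\textbf{MC2 (2-out-of-3).} A composite $g\circ f$ has linear part $g^1f^1$. Hence 2-out-of-3 for quasi-isomorphisms follows from 2-out-of-3 for quasi-isomorphisms of complexes.

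\textbf{MC3 (retracts).} Retracts are handled the same way, since the relevant property is carried by the linear part.

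\textbf{Fibrancy and cofibrancy, finite products.} The maps $A\to 0$ and $0\to A$ have linear parts that are surjective, respectively injective, so every object is fibrant and cofibrant. Finite products are the componentwise product recalled before Definition \ref{defn:model}.

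\textbf{Pullbacks along fibrations.} Let $p\colon E\to B$ have $p^1$ surjective. Over a field, choose a linear splitting and first straighten $p$ to a strict surjection. Concretely, there is an $\Ain$-isomorphism $\phi\colon E\to E'$ with $p\circ\phi^{-1}$ strict; $\phi$ is obtained by correcting $\mathrm{Id}_E$ along the splitting so that all higher components of $p\circ\phi^{-1}$ vanish. Then form the pullback $E'\times_B A$ componentwise at the level of the underlying complexes, with the induced $\Ain$-structure. For this last step I would use that $\Ain$-isomorphisms are exactly the morphisms with invertible linear part.

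\textbf{MC5 (factorization).} For any $f\colon A\to B$, factor it as
\[
A\to A\times(\text{cone-type contractible algebra})\to B
\]
to obtain a cofibration followed by a trivial fibration. Dually, the mapping path object $A\times_B (I\otimes B)$, with $I\otimes B$ from Example \ref{ex:I}, gives a trivial cofibration followed by a fibration. The needed properties come from the strictness and homotopy-equivalence properties MI1--MI4.

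\textbf{MC4 (lifting).} Take a square with $i$ a cofibration, $p$ a fibration, and one of them a quasi-isomorphism. First straighten both to strict maps using the splittings available over a field. Then construct the lift $\ell$ inductively on the arity $n$. At each step the obstruction to solving for $\ell^n$ is a cycle like $\text{Err}_n$ from \eqref{eq:starn-d}. It lies in a complex $Hom(\mathrm{coker}(i^{\otimes n}),\ker p^1)$, or in its variant with the roles of $i$ and $p$ exchanged. Acyclicity of $\ker p^1$ (or of $\mathrm{coker}\,i^1$) makes this Hom complex acyclic over a field, so the obstruction is a boundary and $\ell^n$ exists.

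The main obstacle is this inductive lifting step. The difficulty is the bookkeeping needed to keep the relative conditions $\ell\circ i=u$ and $p\circ\ell=v$ through the induction. I would handle it exactly as in the proof of Proposition \ref{prop:comparison}, replacing $(\text{Incl},\text{Eval}_0\times\text{Eval}_1)$ by $(i,p)$.

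Finally, to get a category of fibrant objects in Brown's sense, I would use that every object is fibrant, that pullbacks along fibrations exist, and that path objects exist, since $I\otimes B$ factors the diagonal by Example \ref{ex:I}.
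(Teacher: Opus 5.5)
The paper gives no proof of this theorem; it only quotes Lef\`evre-Hasegawa and Vallette, so I compare your outline with the standard argument. Several parts are fine: MC2, MC3, fibrancy and cofibrancy, products, the straightening of a single fibration, and the inductive lifting for MC4. The lifting is indeed the proof of Proposition~\ref{prop:comparison} with $(i,p)$ in place of $(\text{Incl},\text{Eval}_0\times\text{Eval}_1)$; in the trivial-cofibration case you need $\mathrm{coker}\big((i^1)^{\otimes n}\big)$ acyclic, which holds over a field by K\"unneth.

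\textbf{Gap 1: the cofibration/trivial fibration factorization.} The factorization $A\to A\times C\to B$ with $C$ contractible cannot give a trivial fibration. Since $H(A\times C)\cong H(A)$, the map $A\times C\to B$ is a quasi-isomorphism only if $f$ already is one. For instance, for $f\colon 0\to B$ with $H(B)\neq0$ the middle object is acyclic. Moreover, $A\times C$ is a product, not a coproduct, so a morphism $A\times C\to B$ is not determined by its restrictions. If you build one by the obstruction argument relative to $A^{\otimes n}\hookrightarrow(A\oplus C)^{\otimes n}$ (whose cokernel is contractible), you get a \emph{trivial cofibration} followed by a fibration. That is the same kind as your mapping path factorization, so the other factorization is still missing. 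The fix is to put the contractible factor over $B$:
\begin{itemize}
\item Take a contractible complex $L$ with $m^n_L=0$ for $n\ge2$, and an injective chain map $A\to L$ (e.g.\ into the cone of $\Id_A$).
\item It extends to an $\Ain$-morphism $j\colon A\to L$, because each obstruction $\text{Err}_n(j)$ is a cycle in the contractible complex $Hom(A^{\otimes n},L)$.
\item Then $A\xrightarrow{(j,f)}L\times B\xrightarrow{\mathrm{pr}_B}B$ is a cofibration followed by a strict surjective quasi-isomorphism.
\end{itemize}

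\textbf{Gap 2: pullbacks along fibrations.} Straightening only the fibration is not enough. Suppose $q\colon E'\to B$ is strict but $g\colon A\to B$ is not. The fibre product $\{(e,a)\mid q^1(e)=g^1(a)\}$ is a subcomplex, but it is not closed under the higher operations of $E'\times A$. Indeed $q^1m^n_{E'}(e_n,\dots,e_1)=m^n_B(g^1a_n,\dots,g^1a_1)$, while by $(\star_n)$ the element $g^1m^n_A(a_n,\dots,a_1)$ differs from this by terms involving $g^2,\dots,g^n$. So there is no induced structure. Nor can $g$ itself be straightened by isomorphisms: if $g^1=0\neq g^2$, then $\psi\circ g\circ\phi$ has zero linear part, so it is strict only if it is zero, i.e.\ only if $g=0$. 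One extra step fixes this:
\begin{itemize}
\item Write $g=\mathrm{pr}_B\circ(\Id_A,g)$. The pullback of $q$ along the strict projection $\mathrm{pr}_B$ is $\Id_A\times q\colon A\times E'\to A\times B$.
\item Since $(\Id_A,g)$ has injective linear part, a graded retraction gives an $\Ain$-isomorphism $\psi$ out of $A\times B$ (transported structure, $\psi^1=\Id$) with $\psi\circ(\Id_A,g)$ strict.
\item Then straighten the fibration $\psi\circ(\Id_A\times q)$ by an isomorphism of its source.
\item Now both legs are strict, so their fibre product of complexes is a sub-$\Ain$-algebra. It is a pullback also for non-strict morphisms, because against strict legs the compatibility condition reads componentwise $(g')^1u^n=(q')^1v^n$.
\end{itemize}
Pasting the two squares gives the pullback of $q$ along $g$. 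Its underlying complex is the naive fibre product, but its operations are not the restricted ones. This explicit description is also what gives the stability of fibrations and trivial fibrations under pullback, which Brown's axioms require and which you did not check. Finally, your mapping path factorization $A\times_B(I\otimes B)$ pulls $\text{Eval}_0$ back along the non-strict $f$, so it relies on this fix.
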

Note that in \cite{Val} the field is assumed of characteristic zero, since general operads are considered.\ 
On the other hand, for $\Ain$-algebras \cite{LH} works over any field.

\begin{rem}\label{rem:split}
In the construction of the pullback of a fibration $q$ \cite[proof of axiom (A) in Th\'eor\`eme 1.3.3.1]{LH}, \cite[proof of (MC1') in Theorem 4.2 and Proposition 4.4 (2)]{Val}, one uses a section of $q^1$ as a map of graded vector spaces.\ 
Over a commutative ring such a section does not exist in general, and one has to restrict to fibrations whose first component is \emph{split} surjective, cf.\ \cite{CO}.\ 
This is consistent with the way condition MI4) is used in \cite{FOOO1}, see the footnote to Definition \ref{defn:model}.
\end{rem}

\begin{defn}\label{defn:path}
Let $B$ be an $\Ain$-algebra.\ A \emph{path object} for $B$ is a factorization of the diagonal
\begin{align*}
B\xrightarrow{\ r\ }P\xrightarrow{\ q=(q_0,q_1)\ }B\times B,\qquad q\circ r=\Delta=(\Id_B,\Id_B),
\end{align*}
in $\mbox{Alg}_{\infty}$, such that $r$ is a weak equivalence.\ It is \emph{good} if $q$ is a fibration and \emph{very good} if moreover $r$ is a cofibration.\ 
Two $\Ain$-morphisms $f,g\colon A\to B$ are \emph{right homotopic through $P$} if there is an $\Ain$-morphism $F\colon A\to P$ such that $q\circ F=(f,g)$, i.e.\ $q_0\circ F=f$ and $q_1\circ F=g$.
\end{defn}

If $P=\mathfrak{C}_B$ is a model for the interval and $q=\text{Eval}_0\times\text{Eval}_1$, being right homotopic through $P$ is exactly being homotopic via $\mathfrak{C}_B$ in the sense of Definition \ref{larks}.

\begin{prop}\label{prop:path}
Every model for the interval $\mathfrak{C}_B$ (Definition \ref{defn:model}) is a very good path object for $B$ in $\mbox{Alg}_{\infty}$, with $r=\text{Incl}$ and $q=\text{Eval}_0\times\text{Eval}_1$.\ 
In particular $I\otimes B$ is a very good path object for $B$, and so is $B\otimes R$ if $\mathbb{Q}\subset\mathbb{K}$.
\end{prop}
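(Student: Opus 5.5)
We verify the three requirements of Definition \ref{defn:path} directly from the axioms MI1)--MI4) of Definition \ref{defn:model}, using the classes of Theorem \ref{thm:LH} (here $\mathbb{K}$ is a field).

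\emph{Factorization of the diagonal.} By MI1) the morphisms $\text{Eval}_0$, $\text{Eval}_1$ and $\text{Incl}$ are strict. The product $B\times B$ has componentwise operations and strict projections, so $q:=\text{Eval}_0\times\text{Eval}_1$ is the strict $\Ain$-morphism with $q^1=(\text{Eval}_0)^1\oplus(\text{Eval}_1)^1$. It commutes with all the $m^n$ because each component does. We then have $q\circ\text{Incl}=(\text{Eval}_0\circ\text{Incl},\text{Eval}_1\circ\text{Incl})=(\Id_B,\Id_B)=\Delta$ by MI2).

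\emph{Weak equivalence and cofibration.} By MI3), $\text{Incl}^1$ is a cochain homotopy equivalence, hence a quasi-isomorphism, so $r=\text{Incl}$ is a weak equivalence. Since $(\text{Eval}_0)^1\circ\text{Incl}^1=\Id_B$ (strictness plus MI2)), $\text{Incl}^1$ is injective, even split. So $r$ is a cofibration.

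\emph{Fibration.} By MI4), $q^1$ is surjective, so $q$ is a fibration. This shows $\mathfrak{C}_B$ is a very good path object.

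For the last sentence: $I\otimes B$ is a model for the interval by Example \ref{ex:I}, and $B\otimes R$ is one when $\mathbb{Q}\subset\mathbb{K}$ by Theorem \ref{thm:R}. Both are therefore very good path objects by the first part.

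The only point needing a little care is that $q$ as defined is genuinely the product map in $\mbox{Alg}_{\infty}$. Composition of $\Ain$-morphisms with the strict projections $B\times B\to B$ recovers $\text{Eval}_0$ and $\text{Eval}_1$, and the universal property of the product (componentwise $\Ain$-morphisms) gives uniqueness. No real obstacle remains beyond this bookkeeping.
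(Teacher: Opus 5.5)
Your proof is correct and follows the paper's argument step by step: MI2) gives $q\circ\text{Incl}=\Delta$, MI3) makes $\text{Incl}$ a weak equivalence, strictness plus MI2) gives injectivity of $\text{Incl}^1$ (so $\text{Incl}$ is a cofibration), MI4) gives surjectivity of $q^1$ (so $q$ is a fibration), and the two examples follow from Example \ref{ex:I} and Theorem \ref{thm:R}. Your extra check that $q$ is the genuine product map in $\mbox{Alg}_{\infty}$ is harmless bookkeeping that the paper leaves implicit.
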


\begin{proof}
By MI2 we have $(\text{Eval}_0\times\text{Eval}_1)\circ\text{Incl}=(\text{Eval}_0\circ\text{Incl},\text{Eval}_1\circ\text{Incl})=(\Id_B,\Id_B)=\Delta$.\ 
By MI3, $\text{Incl}^1$ is a homotopy equivalence, hence a quasi-isomorphism, so $\text{Incl}$ is a weak equivalence.\ 
Since the morphisms are strict (MI1), MI2 gives $(\text{Eval}_0)^1\circ\text{Incl}^1=\Id_B$, hence $\text{Incl}^1$ is injective and $\text{Incl}$ is a cofibration.\ 
Finally the first component of $\text{Eval}_0\times\text{Eval}_1$ is $(\text{Eval}_0)^1\oplus(\text{Eval}_1)^1$, which is surjective by MI4, so $\text{Eval}_0\times\text{Eval}_1$ is a fibration.\ 
The last statement follows from Example \ref{ex:I} and Theorem \ref{thm:R}.
\end{proof}

\begin{prop}\label{prop:indep}
Let $f,g\colon A\to B$ be two $\Ain$-morphisms.\ If $f$ and $g$ are right homotopic through some path object $P$ of $B$, then they are right homotopic through every good path object $P'$ of $B$.\ 
In particular, for any two models $\mathfrak{C}_B$ and $\mathfrak{C}'_B$ for the interval we have $f\sim_{\mathfrak{C}_B}g$ if and only if $f\sim_{\mathfrak{C}'_B}g$.
\end{prop}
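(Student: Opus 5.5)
The argument is the standard Brown-style one for a category of fibrant objects, which is available by Theorem \ref{thm:LH}. Let $B\xrightarrow{r}P\xrightarrow{q}B\times B$ be a path object and $F\colon A\to P$ an $\Ain$-morphism with $q\circ F=(f,g)$. Let $B\xrightarrow{r'}P'\xrightarrow{q'}B\times B$ be a good path object, so $q'$ is a fibration. It suffices to produce an $\Ain$-morphism $\phi\colon P\to P'$ with $q'\circ\phi=q$. Then $\phi\circ F$ is the required right homotopy through $P'$.

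To construct $\phi$, consider the lifting problem given by the commutative square
\begin{align*}
\xymatrix{
B\ar[r]^-{r'}\ar[d]_{r}&P'\ar[d]^{q'}\\
P\ar[r]_-{q}&B\times B
}
\end{align*}
It commutes because $q'\circ r'=\Delta=q\circ r$. The right vertical map is a fibration. If $r$ is a trivial cofibration, axiom (MC4) of Theorem \ref{thm:LH} gives a lift $\phi\colon P\to P'$ with $q'\circ\phi=q$ (and $\phi\circ r=r'$), and we are done.

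In general $r$ is only a weak equivalence, so first factor it by (MC5) as $B\xrightarrow{j}\tilde P\xrightarrow{p}P$, with $j$ a trivial cofibration and $p$ a fibration, hence a trivial fibration by two-out-of-three. Every object is cofibrant, so $A$ is cofibrant. Lifting in the square with $0\to A$ on the left and $p$ on the right, we lift $F$ to $\tilde F\colon A\to\tilde P$ with $p\circ\tilde F=F$. If $\mbox{Alg}_{\infty}$ has no initial object, we instead use the explicit fact that a trivial fibration of $\Ain$-algebras admits an $\Ain$-section, as in \cite{LH}, and set $\tilde F:=\sigma\circ F$ for such a section $\sigma$. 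Now $\tilde P$, with $\tilde r=j$ and $\tilde q=q\circ p$, is a path object whose $\tilde r$ is a trivial cofibration. The first paragraph then gives $\phi\colon\tilde P\to P'$ with $q'\circ\phi=\tilde q$, and $\phi\circ\tilde F$ is the required homotopy, since $q'\circ\phi\circ\tilde F=q\circ p\circ\tilde F=q\circ F=(f,g)$.

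The last assertion follows by combining this with Proposition \ref{prop:path}. Any model for the interval is a (very) good path object, and right homotopy through $\mathfrak{C}_B$ with $q=\text{Eval}_0\times\text{Eval}_1$ is exactly $\sim_{\mathfrak{C}_B}$. Applying the first part in both directions gives the equivalence.

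The main obstacle is the lifting of $F$ along the trivial fibration $p$. Theorem \ref{thm:LH} only gives (MC2)--(MC5), and a lift against $0\to A$ requires an initial object. The zero algebra is initial and terminal here, since an $\Ain$-morphism from $0$ is necessarily $0$, so the lift is legitimate. I would nevertheless check this point, or fall back on the explicit $\Ain$-section of trivial fibrations over a field.
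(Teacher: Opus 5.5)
Your proposal is correct and is essentially the paper's proof. You factor $r=p\circ j$ by (MC5) with $p$ a trivial fibration, and lift $F$ to $\widetilde{F}$ through $p$, using that the zero $\Ain$-algebra is initial and $0\to A$ is a cofibration. You then lift in the square with $j$ against the fibration $q'$ to get $\lambda$ with $q'\circ\lambda=q\circ p$, and take $\lambda\circ\widetilde{F}$. Your fallback via $\Ain$-sections is unnecessary, since, as you note yourself, $0$ is initial in $\mbox{Alg}_{\infty}$, and this is exactly what the paper uses.
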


\begin{proof}
Let $B\xrightarrow{r}P\xrightarrow{q}B\times B$ be a path object and $F\colon A\to P$ with $q\circ F=(f,g)$, and let $B\xrightarrow{r'}P'\xrightarrow{q'}B\times B$ be a good path object.\ 
We use Theorem \ref{thm:LH}.\ 
By (MC5) we factor $r=p\circ j$, where $j\colon B\to Q$ is a trivial cofibration and $p\colon Q\to P$ is a fibration; by (MC2) $p$ is a weak equivalence, hence a trivial fibration.\ 
Since the zero $\Ain$-algebra is initial and $0\to A$ is a cofibration, (MC4) applied to the square
\begin{align*}
\xymatrix{
0\ar[r]\ar[d]&Q\ar[d]^{p}\\
A\ar[r]^{F}&P
}
\end{align*}
gives $\widetilde{F}\colon A\to Q$ with $p\circ\widetilde{F}=F$.\ 
Since $q'\circ r'=\Delta=q\circ r=(q\circ p)\circ j$, (MC4) applied to the square
\begin{align*}
\xymatrix{
B\ar[r]^{r'}\ar[d]_{j}&P'\ar[d]^{q'}\\
Q\ar[r]^{q\circ p}&B\times B
}
\end{align*}
(where $j$ is a trivial cofibration and $q'$ is a fibration) gives $\lambda\colon Q\to P'$ with $\lambda\circ j=r'$ and $q'\circ\lambda=q\circ p$.\ 
Then $F':=\lambda\circ\widetilde{F}\colon A\to P'$ satisfies $q'\circ F'=q\circ p\circ\widetilde{F}=q\circ F=(f,g)$.\ 
The last statement follows from Proposition \ref{prop:path}, since models for the interval are good path objects.
\end{proof}

\begin{rem}\label{rem:FOOO}
Proposition \ref{prop:indep} recovers Theorem \ref{thm:indep} over a field, i.e.\ it gives a short proof of \cite[Lemma 4.2.36]{FOOO1} in the particular case of (unfiltered) $\Ain$-algebras over a field.\ 
The statements of \cite[Theorem 4.2.34]{FOOO1} concerning filtered, gapped, unital or strict morphisms are not covered by this argument, since we do not have a model structure (or a similar structure) on the corresponding categories.\ 
On the other hand, the notion of path object is much more general than the one of model for the interval $[0,1]\times B$, 
since path objects can be taken with non strict $\Ain$-morphisms $r$ and $q$.\ 
\end{rem}

\begin{cor}\label{cor:sim-right}
Let $\mathbb{K}$ be a commutative ring and $f,g\colon A\to B$ two (non unital) $\Ain$-morphisms.\ The following are equivalent:
\begin{itemize}
\item[(1)] $f\sim g$;
\item[(2)] $f\sim_{I\otimes B}g$.
\end{itemize}
If $\mathbb{Q}\subset\mathbb{K}$, they are also equivalent to
\begin{itemize}
\item[(3)] $f\sim_{B\otimes R}g$;
\end{itemize}
and if $\mathbb{K}$ is a field, they are also equivalent to
\begin{itemize}
\item[(4)] $f$ and $g$ are right homotopic through some (equivalently, any good) path object of $B$ in $\mbox{Alg}_{\infty}$.
\end{itemize}
\end{cor}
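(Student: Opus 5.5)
The plan is to prove (1)$\Leftrightarrow$(2) by a direct unwinding of what an $\Ain$-morphism $\mathscr{H}\colon A\to I\otimes B$ is. Then (3) and (4) follow by invoking the independence results already stated.

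For (1)$\Leftrightarrow$(2), write each component as
\[
\mathscr{H}^n=u_0\otimes H_0^n+u_1\otimes H_1^n+h\otimes T^n,
\]
with $H_0^n,H_1^n\in Hom(A^{\otimes n},B)$ of degree $1-n$ and $T^n$ of degree $-n$. Since $p_0\otimes\Id_B$ and $p_1\otimes\Id_B$ are strict and just extract the $u_0$ and $u_1$ coefficients, the conditions $\text{Eval}_0\circ\mathscr{H}=f$ and $\text{Eval}_1\circ\mathscr{H}=g$ say exactly that $H_0=f$ and $H_1=g$.

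It then remains to expand the equations $(\star_n)$ for $\mathscr{H}$ in $I\otimes B$ using the product and differential of $I$. The relevant products are $u_iu_i=u_i$, $u_0u_1=u_1u_0=0$, $u_1h=hu_0=h$ and $h^2=u_0h=hu_1=0$.
\begin{itemize}
\item A product of $u$'s and $h$'s in $I$ is nonzero only when it has the form $u_1\cdots u_1\,h\,u_0\cdots u_0$ (with $h$ at most once), or is all $u_0$, or is all $u_1$.
\item The $u_0$-component and the $u_1$-component of $(\star_n)$ for $\mathscr{H}$ are therefore exactly $(\star_n)$ for $f$ and for $g$.
\item The $h$-component receives three contributions: the terms $m^r_B(g,\dots,g,T,f,\dots,f)$, the terms $T(\dots,m_A,\dots)$, and, from $m^1_{I\otimes B}$ via $du_0=h$ and $du_1=-h$, the terms $f^n-g^n$ up to sign.
\end{itemize}
So the $h$-component of $(\star_n)$ should read $f^n-g^n=\mathfrak{M}^1(T)^n$ with $T^0=0$, where $T$ is regarded as a degree $0$ prenatural transformation $f\to g$ (its $n$-th component having degree $-n$). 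This is Definition \ref{homto}, and reading the computation backwards gives the converse.

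The main obstacle is the sign bookkeeping. The tensor product sign convention for $I\otimes B$ (from \cite[(5)]{Orn3}) must be matched with the signs $(-1)^{\diamond_i}$ and $(-1)^{p-1+\dagger_k}$ in $\mathfrak{M}^1$. I would handle this by first checking $n=1$ and $n=2$ by hand, and then noting that the sign of each term is a Koszul sign determined by the position of $h$. If necessary I would absorb a global sign by replacing $T$ with $-T$.

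For (3): given $\mathbb{Q}\subset\mathbb{K}$, Theorem \ref{thm:R} shows that $B\otimes R$ is a model for the interval, and so is $I\otimes B$ by Example \ref{ex:I}. Theorem \ref{thm:indep}, proved through Proposition \ref{prop:comparison} over any commutative ring, then gives $f\sim_{I\otimes B}g\Leftrightarrow f\sim_{B\otimes R}g$, which is (2)$\Leftrightarrow$(3).

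For (4) over a field:
\begin{itemize}
\item (2)$\Rightarrow$(4) holds because $I\otimes B$ is a very good path object by Proposition \ref{prop:path}.
\item (4)$\Rightarrow$(2) follows from Proposition \ref{prop:indep}: a right homotopy through some path object yields one through every good path object, in particular through $I\otimes B$.
\end{itemize}
The parenthetical ``equivalently, any good'' is part of the same statement of Proposition \ref{prop:indep}.
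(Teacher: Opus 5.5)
Your treatment of (2)$\Leftrightarrow$(3) and (2)$\Leftrightarrow$(4) is exactly the paper's: Example~\ref{ex:I}, Theorem~\ref{thm:R} and Theorem~\ref{thm:indep} for the first, and Propositions~\ref{prop:path} and~\ref{prop:indep} for the second.

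The genuine difference is (1)$\Leftrightarrow$(2). The paper does no computation there. It invokes the characterization of $\sim$ through $I\otimes B$ recalled at (\ref{diag:pathint}), attributed to Seidel's Remark 1.11 and to rational homotopy theory. You instead prove it, by writing $\mathscr{H}^n=u_0\otimes f^n+u_1\otimes g^n+h\otimes T^n$ and splitting $(\star_n)$ into its $u_0$-, $u_1$- and $h$-components.

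Your route makes the corollary self-contained and shows where the conditions of Definition~\ref{homto} come from. First, $T^0=0$ because there is no $\mathscr{H}^0$. Second, $T$ goes from $f$ to $g$ because the only surviving words containing $h$ are $u_1\cdots u_1\,h\,u_0\cdots u_0$. This second point needs the $I$-factors in $m^r_{I\otimes B}$ to be multiplied as $x_r\cdots x_1$, the composition order of Seidel's conventions; the opposite order would give $T\colon g\to f$. The paper's route is shorter but leaves all the sign-dependent content to a reference, since it never writes out the $\Ain$-structure of $I\otimes B$.

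Concerning the obstacle you identify, take the convention analogous to the paper's formula for $B\otimes R$: $m^1_{I\otimes B}(x\otimes b)=dx\otimes b+(-1)^{\deg(x)}x\otimes m^1_B(b)$, and for $r\ge2$ the Koszul signs of moving each $x_k$ past $b_{k-1},\dots,b_1$ and $m^r_B$ past the $x_k$'s. With this convention the signs match on the nose:
\begin{itemize}
\item The terms $\pm(f^n-g^n)$ coming from $du_0=h$, $du_1=-h$ appear as $f^n-g^n$ with no extra sign.
\item A term with a single $h$ in position $i$ carries the sign $-(-1)^{\deg'(b_{i-1})+\dots+\deg'(b_1)}$. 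Since $f$ has shifted degree $0$, the exponent is $\dagger_{s_1+\dots+s_{i-1}}$.
\item Moving these terms to the right-hand side of $(\star_n)$ turns the sign into $(-1)^{\diamond_i}$ for $p=0$. They then sit next to the terms $T(\dots,m_A,\dots)$, which already carry $(-1)^{\dagger_k}=-(-1)^{p-1+\dagger_k}$ for $p=0$.
\end{itemize}
So the $h$-component is exactly $f^n-g^n=\mathfrak{M}^1(T)^n$, and no renormalization of $T$, global or degree-dependent, is needed.
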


\begin{proof}
(1)$\Leftrightarrow$(2) is the characterization of $\sim$ via $I\otimes B$ recalled in (\ref{diag:pathint}).\ 
$I\otimes B$ is a model for the interval $[0,1]\times B$ (Example \ref{ex:I}) and, if $\mathbb{Q}\subset\mathbb{K}$, so is $B\otimes R$ (Theorem \ref{thm:R}); hence (2)$\Leftrightarrow$(3) by Theorem \ref{thm:indep}.\ 
If $\mathbb{K}$ is a field, (2)$\Leftrightarrow$(4) follows from Propositions \ref{prop:path} and \ref{prop:indep}.
\end{proof}

\begin{rem}[Why $f\approx g\not\Rightarrow f\sim g$]\label{rem:notgood}
Let $B$ be a DG algebra, seen as a DG category with one object, and consider the Tabuada model structure on the category of small DG categories over a commutative ring $\mathbb{K}$ \cite{Tab1}.\ We recall that a DG functor $F\colon\mathcal{C}\to\mathcal{D}$ is a fibration if and only if
\begin{itemize}
\item[(F1)] $F$ is surjective on the complexes of morphisms,
\item[(F2)] for every object $c\in\mathcal{C}$ and every $v\in\mathcal{D}(F(c),d)$ which becomes invertible in $H^0(\mathcal{D})$, there is $u\in\mathcal{C}(c,c')$ which becomes invertible in $H^0(\mathcal{C})$ and such that $F(u)=v$,
\end{itemize}
see \cite[Proposition 1.13]{TabTh}.\ 
The DG functors
\begin{align*}
B\xrightarrow{\ \text{Incl}\ }I\otimes B\xrightarrow{\ \text{Eval}_0\times\text{Eval}_1\ }B\times B
\end{align*}
form a path object for $B$ also in this model structure: $\text{Incl}$ is bijective on objects and a homotopy equivalence on morphisms (Example \ref{ex:I}), hence a quasi-equivalence, and the composition is the diagonal.\ 
However $\text{Eval}_0\times\text{Eval}_1$ satisfies (F1) but \emph{not} (F2) if $H^0(B)$ contains two different invertible elements $[u]\neq[w]$.\ 
So $I\otimes B$ is a path object which is \emph{not good} in the Tabuada model structure, while $P(B)\to B\times B$ is a fibration, i.e.\ $P(B)$ is a good path object \cite[\S 3]{Tab2}.\\

For a cofibrant source, the homotopy relation of a model category can be computed with any fixed good path object, but not with an arbitrary path object.\ 
Now, right homotopies (by $\Ain$-functors) through $I\otimes B$ give exactly the relation $\sim$, while $f\approx g$ gives a right homotopy (by an $\Ain$-functor) through $P(B)$, see Remark \ref{diag:approx}.\ 
Example \ref{ex:fukaya} shows that the first relation is strictly finer.\ There the obstruction is visible directly: a right homotopy through $I\otimes C$ between the ring homomorphisms $f,g\colon C\to C$ is an $\Ain$-morphism $H\colon C\to I\otimes C$ with $H^1(x)=u_0\otimes f(x)+u_1\otimes g(x)$ and $H^{n}=0$ for $n\ge 2$ (for degree reasons, since $C$ is concentrated in degree $0$), and $d(H^1(x))=h\otimes(f(x)-g(x))$ vanishes only if $f=g$; the invertible element $c$ with $f=c^{-1}gc$, which is part of the object $\psi^0(\bullet)$ of $P(C)$, has no counterpart in $I\otimes C$.\ 

The argument of the proof of Proposition \ref{prop:indep}, run in the Tabuada model structure (for a cofibrant source), transports a right homotopy through $I\otimes B$ to a right homotopy through the good path object $P(B)$, in accordance with $\sim\Rightarrow\approx$; but it cannot be run in the opposite direction.\ 
For the same reason, the fibrancy of the relative category $(\Ain\mbox{Cat},W^{\tiny\Ain}_{\tiny\mbox{qe}})$ in $\mbox{RelCat}$ \cite{CO} does not imply that all the path objects of $B$ in the weak sense define the same homotopy relation: only those whose second map is a fibration do.\ 
Note that $I\otimes B$ is not even a path object in the sense of Brown \cite{Bro}, where the second map is required to be a fibration.

On the other hand, in $\mbox{Alg}_{\infty}$ the weak equivalences are the quasi-isomorphisms and there are no objects to move: condition (F2) does not appear and $I\otimes B$ is a good path object (Proposition \ref{prop:path}).\ 
This is why $\sim$ is the homotopy relation of $\mbox{Alg}_{\infty}$ (Corollary \ref{cor:sim-right}), whereas $\approx$ is the relevant one for $\Ain$ (or DG) categories up to quasi-equivalence, cf.\ \cite[Theorem 6.18]{Orn2}.
\end{rem}

\section{Homotopic morphisms of 2-colored DG operads}\label{sec:homOp}

In this section we compare the homotopy via the models $I\otimes B$ and $B\otimes R$ with the elementary homotopy constant on the monochromatic parts (Theorem \ref{thm:MSS}).\ 
It follows that, if $\mathbb{Q}\subset\mathbb{K}$, two $\Ain$-morphisms are homotopic if and only if, considered as morphisms of 2-colored DG operads, they are elementary homotopic through a homotopy constant on the monochromatic parts (Corollary \ref{cor:equiv}).\ 
Finally, we give an example of two $\Ain$-morphisms which are elementary homotopic but not homotopic (Example \ref{ex:MSS}).

\begin{thm}\label{thm:MSS}
Let $\mathbb{K}$ be a commutative ring, let $A$, $B$ be two $\Ain$-algebras and let $f,g\colon A\to B$ be two $\Ain$-morphisms.
\begin{itemize}
\item[(1)] If $f\asymp^{c}g$, then there is an $\Ain$-morphism $F\colon A\to B\otimes R$ with $\text{Eval}_0\circ F=f$ and $\text{Eval}_1\circ F=g$; in particular $f\sim_{B\otimes R}g$ if $\mathbb{Q}\subset\mathbb{K}$.
\item[(2)] If $f\sim_{I\otimes B}g$, then $f\asymp^{c}g$.
\end{itemize}
In particular, if $\mathbb{Q}\subset\mathbb{K}$, then $f\sim_{I\otimes B}g\iff f\asymp^{c}g\iff f\sim_{B\otimes R}g$.
\end{thm}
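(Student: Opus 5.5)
For (1) I would unpack what a morphism of 2-colored DG operads $\tau\colon A^2_\infty\to Hom(A,B)\otimes R$ constant on the monochromatic parts is. Since $A^2_\infty$ is free as a graded 2-colored operad on the corollas (\ref{eq:corollas}), $\tau$ is determined by its values on generators. By (\ref{eq:constant}) the blue and red corollas go to $m^n_A\otimes 1_R$ and $m^n_B\otimes 1_R$. The mixed corollas go to elements
\[
F^n:=\tau(\T^{b^{\times n}}_r)\in Hom(A^{\otimes n},B)\otimes R .
\]
Compatibility with differentials on the generators $\T^{b^{\times n}}_r$, using (\ref{eq:dmixed}), reads
\[
d(F^n)=\sum F^{n-j+1}\circ_{k+1}(m^j_A\otimes 1)-\sum (m^p_B\otimes 1)\circ(F^{s_p},\dots,F^{s_1}).
\]
Here $d$ is the differential of $Hom(A,B)\otimes R$, and it contains the $m^1$ terms and $d_R$.

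Next I use the canonical map $\Phi\colon Hom(A^{\otimes n},B)\otimes R\to Hom(A^{\otimes n},B\otimes R)$, $\phi\otimes r\mapsto\big(a\mapsto \pm\phi(a)\otimes r\big)$, with the Koszul sign. I would check that $\Phi$ intertwines the three ingredients: the differential (the $d_R$ part matching the $b\otimes d(r)$ term of $m^1_{B\otimes R}$), precomposition with $m^j_A$, and postcomposition with $m^p_B$ applied to tensor products (which by definition of $m^p_{B\otimes R}$ multiplies the $R$-factors with the sign $\epsilon$). Granting this, the equation above becomes exactly $(\star_n)$ for the family $\Phi(F^n)$, so $F:=(\Phi(F^n))_n$ is an $\Ain$-morphism $A\to B\otimes R$. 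Since $\text{ev}_k$ is a DG algebra map, $(\Id\otimes \text{ev}_k)\circ\tau=\rho_f,\rho_g$ translates into $\text{Eval}_0\circ F=f$ and $\text{Eval}_1\circ F=g$. When $\mathbb{Q}\subset\mathbb{K}$, $B\otimes R$ is a model by Theorem \ref{thm:R}, so $f\sim_{B\otimes R}g$. The main work in this direction is the sign bookkeeping in $\Phi$; conceptually it is just an inclusion.

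For (2), the converse requires $\Phi$ to go backwards. An $\Ain$-morphism $A\to B\otimes R$ whose components lie in the image of $\Phi$ (finite sums of $\phi\otimes r$) gives back $\tau$ by the same dictionary. So I start with $H\colon A\to I\otimes B$ with $\text{Eval}_k\circ H=f,g$ and compose with a comparison morphism $\mathfrak{F}\colon I\otimes B\to B\otimes R$ given by Proposition \ref{prop:comparison} with $f=\Id_B$. This needs $\mathbb{Q}$, or at least requires $B\otimes R$ to satisfy MI3 and MI4. To stay over an arbitrary commutative ring, I would rather construct $\mathfrak{F}$ directly, or else construct $F\colon A\to B\otimes R$ directly by induction on $n$. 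Both routes use the refined statement (Lemma \ref{lem:finite}): $\mathfrak{F}$ can be chosen with components in $Hom((I\otimes B)^{\otimes k},B)\otimes R$.

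A concrete try is to set $F^n$ equal to the $u_0$-part of $H$ times $(1-t)$, plus the $u_1$-part times $t$, plus the $h$-part times $\pm dt$. This is the linear interpolation, a DG algebra-like map $I\to R$, namely $u_0\mapsto 1-t$, $u_1\mapsto t$, $h\mapsto -dt$. It is not multiplicative, since $(1-t)^2\ne 1-t$, so the correction has to be built inductively. I would take the strict map on the linear component as the start and correct the higher components $\mathfrak{F}^k$. At each step the obstruction $\text{Err}_k$ is a cycle that vanishes after applying $\text{Eval}_0\times\text{Eval}_1$ and after restricting along $\text{Incl}$. It therefore lies in a complex $Hom(Q,J\cap(Hom\otimes R))$, with $J=\ker(\text{ev}_0,\text{ev}_1)\subset R$. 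The main obstacle is showing that this complex is contractible using only polynomial primitives, where $J$ has the contraction $s_0$ corrected at $t=1$. The integrals there need $\mathbb{Q}$, so I expect that the ring hypothesis effectively enters here, or that one avoids it because only elements of the finitely generated subcomplex spanned by $1,t,t(1-t),dt,\dots$ occur. Composing $\mathfrak{F}\circ H$ keeps components in $Hom(A^{\otimes n},B)\otimes R$, and this yields $\tau$, constant on the monochromatic parts by construction. The final chain of equivalences then follows from (1), (2) and Theorem \ref{thm:indep}.
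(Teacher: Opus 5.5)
Your part (1) is the paper's argument: set $F^n:=\Phi(\tau(\T^{b^{\times n}}_r))$. Since $\Phi$ is a chain map compatible with precomposition by $m^j_A$ and postcomposition by $m^p_B$ as in (\ref{eq:c1})--(\ref{eq:c2}), the differential condition on the mixed corollas becomes $d(F^n)=\text{Err}_n(F)$. The skeleton of (2) is also the paper's. You set $F:=\mathfrak{F}\circ\mathscr{H}$ with $\mathfrak{F}$ of finite type, define $\tau(\T^{b^{\times n}}_r):=\Phi^{-1}(F^n)$, and conclude by injectivity of $\Phi$. If you simply invoke Lemma~\ref{lem:finite}, which is stated over an arbitrary commutative ring, the proof is complete.

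The gap is in your sketch of how to obtain $\mathfrak{F}$. You place the acyclicity in the wrong place, and this is why you suspect that $\mathbb{Q}$ enters.

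The complex $J=\ker(\text{ev}_0,\text{ev}_1)$ is \emph{not} contractible, so no correction of $s_0$ can contract it. Already when $\mathbb{Q}\subset\mathbb{K}$, i.e.\ exactly when your integrals exist, $dt\in J^1$ is a cycle. Any $p$ with $p'=1$ and $p(0)=0$ is $p=t$, so $p(1)=1\neq0$, and $dt$ is not a boundary in $J$. In fact $H^1(J)\cong\mathbb{K}$, by the long exact sequence of $0\to J\to R\to\mathbb{K}\oplus\mathbb{K}\to0$.

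The missing idea is that the contractibility comes from the \emph{source}. $Q_n=\text{coker}\big((i\otimes\Id_B)^{\otimes n}\big)$ is contractible because $i\colon\mathbb{K}\to I$ is a split injective homotopy equivalence over any ring. For instance, $s(h)=u_0$, $s(u_0)=s(u_1)=0$ gives $ds+sd=\Id_I-i\circ p^1_1$. Hence $Hom(Q_n,B)$ is contractible, and so is $Hom(Q_n,B)\otimes J$ for \emph{any} complex $J$. The paper identifies this complex, via $\Phi$, with the finite-type part $M^{0,\text{ft}}_n$ of the obstruction complex. The inductive step then needs no integration at all.

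Your worry that Proposition~\ref{prop:comparison} needs MI3 for $B\otimes R$ is likewise unfounded. MI3 is used only for the source $I\otimes B$. MI4 for $B\otimes R$ holds over any ring via the section $(b_0,b_1)\mapsto b_0\otimes(1-t)+b_1\otimes t$.

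So (2) holds over an arbitrary commutative ring, as stated. The hypothesis $\mathbb{Q}\subset\mathbb{K}$ enters only in the final chain of equivalences, through Theorem~\ref{thm:R} and Theorem~\ref{thm:indep}.
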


The last statement follows from (1), (2) and Theorem \ref{thm:indep}, since $I\otimes B$ and $B\otimes R$ are models for the interval $[0,1]\times B$ when $\mathbb{Q}\subset\mathbb{K}$.\ 
By Corollary \ref{cor:sim-right} we obtain:

\begin{cor}\label{cor:equiv}
Suppose that $\mathbb{Q}\subset\mathbb{K}$ and let $A$, $B$ be two (non unital) $\Ain$-algebras.\ Then two $\Ain$-morphisms $f,g\colon A\to B$ satisfy $f\sim g$ if and only if $f\asymp^{c}g$.
\end{cor}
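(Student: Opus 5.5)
The plan is to deduce the corollary formally from Theorem \ref{thm:MSS} and Corollary \ref{cor:sim-right}, with no new construction. The only standing hypothesis needed is $\mathbb{Q}\subset\mathbb{K}$. It guarantees, by Theorem \ref{thm:R}, that $B\otimes R$ with $\text{Eval}_0$, $\text{Eval}_1$, $\text{Incl}$ is a model for the interval $[0,1]\times B$. By Example \ref{ex:I}, $I\otimes B$ is such a model over any commutative ring.

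For the direction $f\sim g\Rightarrow f\asymp^{c}g$, I first invoke the equivalence (1)$\Leftrightarrow$(2) of Corollary \ref{cor:sim-right}. This is just the reformulation (\ref{diag:pathint}) of Seidel's homotopy through $I\otimes B$, and it gives $f\sim_{I\otimes B}g$. Then Theorem \ref{thm:MSS}(2), which holds over any commutative ring, gives directly $f\asymp^{c}g$.

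For the converse, assume $f\asymp^{c}g$. Theorem \ref{thm:MSS}(1) produces an $\Ain$-morphism $F\colon A\to B\otimes R$ with $\text{Eval}_0\circ F=f$ and $\text{Eval}_1\circ F=g$. Since $\mathbb{Q}\subset\mathbb{K}$, this says $f\sim_{B\otimes R}g$ in the sense of Definition \ref{larks}. Since both $B\otimes R$ and $I\otimes B$ are models for the interval, Theorem \ref{thm:indep} converts this into $f\sim_{I\otimes B}g$. Concretely, Theorem \ref{thm:indep} follows from Proposition \ref{prop:comparison} applied to $f=\Id_B$, which yields a comparison morphism $\mathfrak{F}\colon B\otimes R\to I\otimes B$ compatible with the evaluations, and one composes it with $F$. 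Finally, (2)$\Rightarrow$(1) of Corollary \ref{cor:sim-right} gives $f\sim g$. Equivalently, one can cite (1)$\Leftrightarrow$(3) of Corollary \ref{cor:sim-right} directly.

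There is no genuine obstacle at this level: all the substance lies in Theorem \ref{thm:MSS} and in the comparison morphisms of Proposition \ref{prop:comparison}, which are assumed. The only point to check is that the ring hypotheses line up. Proposition \ref{prop:comparison} is valid over an arbitrary commutative ring, so Theorem \ref{thm:indep} needs nothing beyond the two objects being models for the interval. The hypothesis $\mathbb{Q}\subset\mathbb{K}$ enters only through Theorem \ref{thm:R}, where integration is needed to build the homotopies $s_k$ witnessing MI3 for $B\otimes R$.
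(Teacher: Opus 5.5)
Your proposal is correct and follows the paper's own route: combine Theorem \ref{thm:MSS}(1) and (2) with Theorem \ref{thm:indep}, using that $I\otimes B$ and $B\otimes R$ are both models for the interval when $\mathbb{Q}\subset\mathbb{K}$, and then use $f\sim g\iff f\sim_{I\otimes B}g$ from Corollary \ref{cor:sim-right}. Your accounting of where $\mathbb{Q}\subset\mathbb{K}$ enters is also accurate: it is needed only to make $B\otimes R$ a model, and in particular for MI3 on the source of the comparison morphism $B\otimes R\to I\otimes B$.
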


To prove Theorem \ref{thm:MSS} we need some preliminaries.\ 
Let $V$ be a complex (in our applications $V=A$ or $V=I\otimes B$, with the differential $m^1$) and let $n\ge1$.\ For a homogeneous $x=(v_n,...,v_1)\in V^{\otimes n}$ we put $\deg'(x):=\deg'(v_n)+...+\deg'(v_1)$, and we consider the map
\begin{align*}
\Phi\colon Hom(V^{\otimes n},B)\otimes R\to Hom(V^{\otimes n},B\otimes R),\qquad \Phi(\varphi\otimes r)(v_n,...,v_1):=(-1)^{\deg(r)\deg'(x)}\,\varphi(v_n,...,v_1)\otimes r .
\end{align*}
Here $Hom(V^{\otimes n},B)$ and $Hom(V^{\otimes n},B\otimes R)$ are the Hom complexes of Section \ref{sec:background}.\
Note that $\Phi$ is injective but, in general, it is not surjective.\ 
We say that a map $G\colon V^{\otimes n}\to B\otimes R$ is \emph{of finite type} if it is in the image of $\Phi$.\ 
The maps of finite type form a subcomplex, and we have:
\begin{itemize}
\item[(a)] if $G\colon V^{\otimes n}\to B\otimes R$ is of finite type, then so is $G\circ\alpha$ for every $\mathbb{K}$-linear map $\alpha\colon W^{\otimes m}\to V^{\otimes n}$.
\item[(b)] if $G_1,...,G_p$ are of finite type, then so is $m^p_{B\otimes R}\circ(G_p\otimes...\otimes G_1)$.
\item[(c)] Now $V=A$, $\Phi$ is compatible with the compositions with $m_A$ and $m_B$: we have
\begin{align}\label{eq:c1}
\Phi\big((\varphi\otimes r)\circ_k(\chi\otimes 1_R)\big)=\Phi(\varphi\otimes r)\circ_k\chi
\end{align}
for $\chi\in Hom(A^{\otimes j},A)$, $\varphi\otimes r\in Hom(A^{\otimes l},B)\otimes R$ and $1\le k\le l$, and, for $p\ge2$ and $\gamma_1,...,\gamma_p$ in $Hom(A^{\otimes s_1},B)\otimes R,...,Hom(A^{\otimes s_p},B)\otimes R$,
\begin{align}\label{eq:c2}
\Phi\big((m^p_B\otimes 1_R)\circ(\gamma_p,...,\gamma_1)\big)=m^p_{B\otimes R}\circ\big(\Phi(\gamma_p),...,\Phi(\gamma_1)\big),
\end{align}
where the compositions on the left are those of the 2-colored operad $Hom(A,B)\otimes R$ of Subsection \ref{subsec:elemhomot}, and those on the right are compositions of multilinear maps.
\end{itemize}

In the next lemma we consider $I\otimes B$ and $B\otimes R$ with the maps $\text{Eval}_s$ and $\text{Incl}$ of Example \ref{ex:I} and Theorem \ref{thm:R}.\ 
Note that $B\otimes R$ satisfies MI1, MI2 and MI4 over any commutative ring $\mathbb{K}$, and that in the proof of Proposition \ref{prop:comparison} below the axiom MI3 is used only for the source.

\begin{lemma}\label{lem:finite}
Let $\mathbb{K}$ be a commutative ring.\ There exists an $\Ain$-morphism $\mathfrak{F}\colon I\otimes B\to B\otimes R$ such that $\text{Eval}_s\circ\mathfrak{F}=\text{Eval}_s$ ($s=0,1$), $\mathfrak{F}\circ\text{Incl}=\text{Incl}$, and $\mathfrak{F}^k$ is of finite type for every $k\ge 1$.
\end{lemma}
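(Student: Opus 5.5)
We construct $\mathfrak{F}$ inductively on the number of inputs, following the strategy announced for Proposition~\ref{prop:comparison}. The new point is that every choice is made inside the subcomplex of maps of finite type, i.e.\ inside $\Phi\big(Hom((I\otimes B)^{\otimes k},B)\otimes R\big)$.

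Write $V:=I\otimes B$, $J:=\ker(\text{Eval}_0\times\text{Eval}_1)\subset B\otimes R$, so $J=B\otimes R_J$ with $R_J:=\ker(\text{ev}_0,\text{ev}_1)=t(1-t)\mathbb{K}[t]\oplus\mathbb{K}[t]dt$. Let $Q_k:=\text{coker}\big((\text{Incl}^1)^{\otimes k}\colon B^{\otimes k}\to V^{\otimes k}\big)$. Since $\text{Incl}^1$ is split injective with a complement spanned by $h\otimes B$ and $(u_0-u_1)\otimes B$, $Q_k$ is a direct summand of $V^{\otimes k}$ as graded module. Moreover $\text{Incl}^1$ is a homotopy equivalence with homotopy of the form $\sigma\otimes\Id_B$ on $I$, so $Q_k$ is contractible, with a contracting homotopy of the form $\sigma_k$ built from $\sigma$ and $\Id_B$. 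Because $I$ is free of rank $3$ over $\mathbb{K}$, $V^{\otimes k}$ is a finite direct sum of copies of $B^{\otimes k}$ (up to shifts), and this finiteness is what makes the finite type condition manageable.

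\textbf{Degree one.} Define
\[
\mathfrak{F}^1(u_0\otimes b)=b\otimes(1-t),\qquad \mathfrak{F}^1(u_1\otimes b)=b\otimes t,\qquad \mathfrak{F}^1(h\otimes b)=\pm b\otimes dt,
\]
with the sign fixed so that $\mathfrak{F}^1$ is a chain map. This map is visibly $\Phi$ of an element of $Hom(V,B)\otimes R$. It satisfies $\text{Eval}_s\circ\mathfrak{F}^1=\text{Eval}_s^1$ and $\mathfrak{F}^1\circ\text{Incl}^1=\text{Incl}^1$.

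\textbf{Degree $k\ge 2$.} Suppose $\mathfrak{F}^1,\dots,\mathfrak{F}^{k-1}$ are of finite type and satisfy $(\star_j)$ for $j<k$, together with the compatibilities $\text{Eval}_s\circ\mathfrak{F}^j=0$ and $\mathfrak{F}^j\circ(\text{Incl}^1)^{\otimes j}=0$ for $j\ge2$. By (a) and (b), $\text{Err}_k(\mathfrak{F})$ is of finite type. It is a cycle. Since $\text{Eval}_s$ and $\text{Incl}$ are strict, the compatibilities show that it takes values in $J$ and vanishes on the image of $(\text{Incl}^1)^{\otimes k}$. Hence $\text{Err}_k(\mathfrak{F})$ is a cycle in
\[
Hom(Q_k,J)\cap\text{(finite type)}\cong Hom(Q_k,B)\otimes R_J.
\]
This complex is contractible: precomposition with $\sigma_k$ preserves finite type by (a), and it maps $Hom(Q_k,B)\otimes R_J$ into itself because it acts only on the source. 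So $\text{Err}_k(\mathfrak{F})=d(\mathfrak{F}^k)$ for some $\mathfrak{F}^k$ of finite type that vanishes on $\text{Incl}^{\otimes k}$ and takes values in $J$. This gives $(\star_k)$ and the required compatibilities, which completes the induction.

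\textbf{Main obstacle.} The delicate step is to check that the identification of finite type maps $Q_k\to J$ with $Hom(Q_k,B)\otimes R_J$ holds, and that the contraction respects it. We need two things. First, $\Phi$ restricted to $\text{Hom}(Q_k,B)\otimes R_J$ must land exactly in the finite type maps into $J$. This uses that $R_J$ is a direct summand of $R$ as a $\mathbb{K}$-module, which is clear since $1,t$ span a complement. Second, the homotopy has to be applied on the source side and not via integration on $R$. The source-side homotopy avoids any need for $\mathbb{Q}\subset\mathbb{K}$, and this is precisely why MI3 is only needed for the source $I\otimes B$.
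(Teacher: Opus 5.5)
Your proposal is correct and is essentially the paper's argument: it is the induction of Proposition \ref{prop:comparison} run with $f=\Id_B$ (so the correction term $G$ of that proof vanishes for $k\ge 2$, and your explicit $\mathfrak{F}^1$ is one particular choice of $G+X$). The finite-type part of $M^0_k$ is contractible because the source-side contraction acts by precomposition, which preserves finite type by (a); in fact this alone suffices, so the identification with $Hom(Q_k,B)\otimes R_J$ is not even needed.

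One small slip: when $2$ is not invertible in $\mathbb{K}$, the submodules $h\otimes B$ and $(u_0-u_1)\otimes B$ do not span a complement of $(u_0+u_1)\otimes B$ (take $u_0\otimes B\oplus h\otimes B$, or $\ker r_k$ with $r_k=((\text{Eval}_0)^1)^{\otimes k}$, instead). This does no harm, since you only use that $(\text{Incl}^1)^{\otimes k}$ is split injective, which holds anyway by MI2.
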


Lemma \ref{lem:finite} follows from (a refinement of) the proof of Proposition \ref{prop:comparison}, which we now give.\ 
Recall that, given an $\Ain$-morphism $f\colon C_1\to C_2$ and two models $\mathfrak{C}_1$, $\mathfrak{C}_2$ for the interval $[0,1]\times C_1$ and $[0,1]\times C_2$, respectively, we want an $\Ain$-morphism $\mathfrak{F}\colon\mathfrak{C}_1\to\mathfrak{C}_2$ making the diagram
\begin{align}\label{eq:condizzielle}
\xymatrix{
C_1\ar[d]^{\text{Incl}}\ar[r]^f& C_2\ar[d]^{\text{Incl}}\\
\mathfrak{C}_1 \ar[d]^{\text{Eval}_{s}}\ar@{-->}[r]^{\mathfrak{F}}& \mathfrak{C}_2\ar[d]^{\text{Eval}_{s}}\\
C_1\ar[r]^f& C_2
}
\end{align}
commutative for $s=0,1$, i.e.\ satisfying (\ref{eq:compatible}).

\begin{proof}[Proof of Proposition \ref{prop:comparison}]
We write $E:=(\text{Eval}_0)^1\oplus(\text{Eval}_1)^1\colon\mathfrak{C}_2\to C_2\oplus C_2$ and $J_2:=\ker(E)$, which is a subcomplex of $\mathfrak{C}_2$.\\ 
For $n\ge 1$ we use the following notation: $D_n:=\mathfrak{C}_1^{\otimes n}$, $i_n:=(\text{Incl}^1)^{\otimes n}\colon C_1^{\otimes n}\to D_n$, $r_n:=\big((\text{Eval}_0)^1\big)^{\otimes n}\colon D_n\to C_1^{\otimes n}$, and $Q_n:=\text{coker}(i_n)$.\ 
By MI2 we have $r_n\circ i_n=\Id$, so $i_n$ is split injective, and by MI3 $i_n$ is a homotopy equivalence.\ 
Since the sequence $0\to C_1^{\otimes n}\to D_n\to Q_n\to 0$ is degreewise split, $Q_n$ is homotopy equivalent to the cone of $i_n$, hence it is contractible.\ 
Therefore the complex
\begin{align*}
M^0_n:=\big\{G\in Hom(D_n,\mathfrak{C}_2)\ \big|\ E\circ G=0,\ G\circ i_n=0\big\}=Hom(Q_n,J_2)
\end{align*}
is contractible.\ 
We construct $\mathfrak{F}^n$ by induction on $n\ge 1$.\\ 
Suppose that $\mathfrak{F}^1,...,\mathfrak{F}^{n-1}$ satisfy $(\star_m)$ and (\ref{eq:compatibleK}) for $m<n$.\ 
Since $\text{Eval}_s$ and $\text{Incl}$ are strict, the conditions (\ref{eq:compatible}) for $n$ are equivalent to
\begin{align}\label{eq:compatibleK}
E\circ\mathfrak{F}^n=\big(f^n\circ((\text{Eval}_0)^1)^{\otimes n},f^n\circ((\text{Eval}_1)^1)^{\otimes n}\big),\qquad \mathfrak{F}^n\circ i_n=\text{Incl}^1\circ f^n .
\end{align}
Let $\sigma$ be a section of $E$ (MI4) and put $G_0:=\sigma\circ\big(f^n\circ((\text{Eval}_0)^1)^{\otimes n},f^n\circ((\text{Eval}_1)^1)^{\otimes n}\big)$ and
\begin{align*}
G:=G_0-\big(G_0\circ i_n-\text{Incl}^1\circ f^n\big)\circ r_n .
\end{align*}
By MI2, $E\circ\text{Incl}^1=(\Id,\Id)$ and $E\circ G_0\circ i_n=(f^n,f^n)$, hence $G_0\circ i_n-\text{Incl}^1\circ f^n$ takes values in $J_2$.\ It follows that $G$ satisfies (\ref{eq:compatibleK}).\\
Let $\text{Err}_n:=\text{Err}_n(\mathfrak{F})$ (with $\text{Err}_1:=0$); it is a cycle.\ 
Since $\text{Eval}_s$ is a strict $\Ain$-morphism, $(\text{Eval}_s)^1\circ\text{Err}_n(\mathfrak{F})=\text{Err}_n(\text{Eval}_s\circ\mathfrak{F})$, and $\text{Eval}_s\circ\mathfrak{F}$ coincides for $m<n$ with the $\Ain$-morphism $f\circ\text{Eval}_s$.\\ 
Hence, by $(\star_n)$ for $f\circ\text{Eval}_s$ and by (\ref{eq:compatibleK}) for $G$,
\begin{align*}
(\text{Eval}_s)^1\circ\text{Err}_n=d\big(f^n\circ((\text{Eval}_s)^1)^{\otimes n}\big)=(\text{Eval}_s)^1\circ d(G).
\end{align*}
In the same way, since $\text{Incl}$ is strict, $\text{Err}_n(\mathfrak{F})\circ i_n=\text{Err}_n(\mathfrak{F}\circ\text{Incl})=\text{Err}_n(\text{Incl}\circ f)=d\big(\text{Incl}^1\circ f^n\big)=d(G)\circ i_n$.\ 
Thus $\text{Err}_n-d(G)$ is a cycle of the contractible complex $M^0_n$, and there is $X\in M^0_n$ with $d(X)=\text{Err}_n-d(G)$.\ 
We put $\mathfrak{F}^n:=G+X$.\ Then $d(\mathfrak{F}^n)=\text{Err}_n$, i.e.\ $(\star_n)$ holds, and $\mathfrak{F}^n$ satisfies (\ref{eq:compatibleK}), since $G$ does and $X\in M^0_n$.
\end{proof}

\begin{proof}[Proof of Lemma \ref{lem:finite}]
It suffices to run the proof of Proposition \ref{prop:comparison}, setting $\mathfrak{C}_1=I\otimes B$, $\mathfrak{C}_2=B\otimes R$ and $f=\Id_B$, and check that all the choices can be made of finite type.\\ 
As before, we denote $D_n=(I\otimes B)^{\otimes n}$, $i_n=(i\otimes\Id_B)^{\otimes n}\colon B^{\otimes n}\to D_n$, $r_n=(p^1_0\otimes\Id_B)^{\otimes n}$, and $E=(\text{Eval}_0)^1\oplus(\text{Eval}_1)^1=\Id_B\otimes(\text{ev}_0,\text{ev}_1)\colon B\otimes R\to B\oplus B$; note that $E\circ\Phi(\psi\otimes r)=\big(\text{ev}_0(r)\psi,\text{ev}_1(r)\psi\big)$.\\ 
Suppose that $\mathfrak{F}^1,...,\mathfrak{F}^{n-1}$ are of finite type.\
First, we take the section $\sigma(b_0,b_1):=b_0\otimes(1-t)+b_1\otimes t$ of $E$.\ Then
\begin{align*}
G_0=\Phi\big(f^n\circ(p^1_0\otimes\Id_B)^{\otimes n}\otimes(1-t)+f^n\circ(p^1_1\otimes\Id_B)^{\otimes n}\otimes t\big)\quad\text{and}\quad \text{Incl}^1\circ f^n=\Phi(f^n\otimes 1)
\end{align*}
are of finite type, hence so is $G=G_0-(G_0\circ i_n-\text{Incl}^1\circ f^n)\circ r_n$, by (a).\ 
By (a) and (b), $\text{Err}_n$ is of finite type, and so is $d(G)$.\ Hence $\text{Err}_n-d(G)$ is a cycle of the subcomplex
\begin{align*}
M^{0,\text{ft}}_n:=M^0_n\cap Hom^{\text{ft}}(D_n,B\otimes R)\subset M^0_n .
\end{align*}
Let $J:=\ker(\text{ev}_0,\text{ev}_1)\subset R$, a subcomplex of $R$.\ As graded modules $R=J\oplus\mathbb{K}\,1\oplus\mathbb{K}\,t$.\ 
It is not hard to prove that $\Phi$ restricts to an isomorphism of complexes
\begin{align*}
Hom(Q_n,B)\otimes J\ \xrightarrow{\ \cong\ }\ M^{0,\text{ft}}_n.
\end{align*}
By the proof of Proposition \ref{prop:comparison}, $Q_n$ is contractible.\ Hence $Hom(Q_n,B)$ is contractible, and so is $Hom(Q_n,B)\otimes J$.\ By (iii), $M^{0,\text{ft}}_n$ is contractible.\
Therefore there is $X\in M^{0,\text{ft}}_n$ with $d(X)=\text{Err}_n-d(G)$, and $\mathfrak{F}^n=G+X$ is of finite type.
\end{proof}

In order to lighten the notation from now on, we write $\tau(\T)$ instead of $\tau(c_1,...,c_n;c)(\T)$.

\begin{proof}[Proof of Theorem \ref{thm:MSS}]
We assume that $f\asymp^c g$ and we call  
$$\tau\colon A^2_\infty\to Hom(A,B)\otimes R$$ 
be an elementary homotopy from $\rho_f$ to $\rho_g$ which is constant on the monochromatic parts, i.e.\ $(\Id\otimes\text{ev}_0)\circ\tau=\rho_f$, $(\Id\otimes\text{ev}_1)\circ\tau=\rho_g$ and (\ref{eq:constant}) holds.\ For $n\ge1$ we put
\begin{align*}
F^n:=\Phi\big(\tau(\T^{b^{\times n}}_r)\big)\in Hom(A^{\otimes n},B\otimes R).
\end{align*}
Since $\tau(\T^{b^{\times n}}_r)$ has shifted degree $0$, $F^n$ has degree $1-n$.\ Since $\Phi$ is a morphism of complexes and $\tau$ commutes with the differentials,
\begin{align*}
d(F^n)=\Phi\big(d\,\tau(\T^{b^{\times n}}_r)\big)=\Phi\big(\tau(d_{A^2_\infty}\T^{b^{\times n}}_r)\big).
\end{align*}
By (\ref{eq:dmixed}) and (\ref{eq:constant}),
\begin{align*}
\tau\big(d_{A^2_\infty}\T^{b^{\times n}}_r\big)=\sum_{j=2}^{n}\ \sum_{k=0}^{n-j}\tau\big(\T^{b^{\times(n-j+1)}}_r\big)\circ_{k+1}\big(m^j_A\otimes1_R\big)-\sum_{p=2}^{n}\ \sum_{\substack{s_1+...+s_p=n\\ s_1,...,s_p\ge1}}\big(m^p_B\otimes1_R\big)\circ\big(\tau(\T^{b^{\times s_p}}_r),...,\tau(\T^{b^{\times s_1}}_r)\big),
\end{align*}
hence, by (\ref{eq:c1}) and (\ref{eq:c2}),
\begin{align*}
\Phi\big(\tau(d_{A^2_\infty}\T^{b^{\times n}}_r)\big)=\sum_{j=2}^{n}\ \sum_{k=0}^{n-j}F^{n-j+1}\circ_{k+1}m^j_A-\sum_{p=2}^{n}\ \sum_{\substack{s_1+...+s_p=n\\ s_1,...,s_p\ge1}}m^p_{B\otimes R}\circ\big(F^{s_p},...,F^{s_1}\big).
\end{align*}
Evaluated on $(a_n,...,a_1)$, the right hand side is exactly $\text{Err}_n(F)$ of (\ref{eq:starn-d}), with $C=B\otimes R$. 
Therefore $d(F^n)=\text{Err}_n(F)$ for every $n\ge1$, i.e.\ $F=(F^n)_{n\ge1}$ is an $\Ain$-morphism $A\to B\otimes R$.\ 
Finally, $\text{Eval}_s$ is strict and $(\text{Eval}_s)^1\circ\Phi(\varphi\otimes r)=\text{ev}_s(r)\varphi$, since $\text{ev}_s$ vanishes on $\mathbb{K}[t]\,dt$; hence
\begin{align*}
(\text{Eval}_s\circ F)^n=(\text{Eval}_s)^1\circ F^n=\big((\Id\otimes\text{ev}_s)\circ\tau\big)(\T^{b^{\times n}}_r),
\end{align*}
which is $f^n$ for $s=0$ and $g^n$ for $s=1$.\ So $\text{Eval}_0\circ F=f$ and $\text{Eval}_1\circ F=g$.\ If $\mathbb{Q}\subset\mathbb{K}$, $B\otimes R$ is a model for the interval (Theorem \ref{thm:R}), $f\sim_{B\otimes R}g$ and we are done.\\

Now we assume that $f\sim_{I\otimes B}g$: It implies that there is an $\Ain$-morphism $\mathscr{H}\colon A\to I\otimes B$ with $\text{Eval}_0\circ\mathscr{H}=f$ and $\text{Eval}_1\circ\mathscr{H}=g$.\ 
Let $\mathfrak{F}$ be as in Lemma \ref{lem:finite} and $F:=\mathfrak{F}\circ\mathscr{H}\colon A\to B\otimes R$.\ Then $\text{Eval}_s\circ F=\text{Eval}_s\circ\mathscr{H}$, which is $f$ for $s=0$ and $g$ for $s=1$, and every component
\begin{align*}
F^n=\sum_{k\ge1}\ \sum_{i_1+...+i_k=n}\mathfrak{F}^k\circ\big(\mathscr{H}^{i_k}\otimes...\otimes\mathscr{H}^{i_1}\big)
\end{align*}
Since $A^2_\infty$ is free as a graded 2-colored operad, we can define a morphism of graded 2-colored operads $\tau\colon A^2_\infty\to Hom(A,B)\otimes R$ on the generators by
\begin{align*}
\tau(\T^{b^{\times n}}_b):=m^n_A\otimes 1_R,\qquad \tau(\T^{r^{\times n}}_r):=m^n_B\otimes 1_R\quad (n\ge2),\qquad \tau(\T^{b^{\times n}}_r):=\Phi^{-1}(F^n)\quad(n\ge1).
\end{align*}
It satisfies (\ref{eq:constant}), and we check that it commutes with the differentials on the generators.\ 
On the monochromatic parts, $\tau$ is the composition of $\rho_A$ (resp.\ $\rho_B$) with $x\mapsto x\otimes 1_R$, which is a morphism of DG operads since $\mathbb{K}\to R$ is a morphism of DG algebras.\ 
On the mixed generators, the previous computation, gives
\begin{align*}
\Phi\big(\tau(d_{A^2_\infty}\T^{b^{\times n}}_r)\big)=\text{Err}_n(F),\qquad \Phi\big(d\,\tau(\T^{b^{\times n}}_r)\big)=d(F^n),
\end{align*}
and $d(F^n)=\text{Err}_n(F)$ since $F$ is an $\Ain$-morphism; since $\Phi$ is injective, $d\,\tau(\T^{b^{\times n}}_r)=\tau(d_{A^2_\infty}\T^{b^{\times n}}_r)$.\ 
Finally, $\big((\Id\otimes\text{ev}_s)\circ\tau\big)(\T^{b^{\times n}}_r)=(\text{Eval}_s)^1\circ F^n=(\text{Eval}_s\circ F)^n$, so $(\Id\otimes\text{ev}_0)\circ\tau=\rho_f$ and $(\Id\otimes\text{ev}_1)\circ\tau=\rho_g$.
\end{proof}

\begin{rem}
Note that we do not have a morphism of 2-colored DG operads 
$$Hom(A,B)\otimes R\to Hom(A,B\otimes R)$$
extending $\Phi$.\ On the blue part there is no natural morphism $Hom(A^{\otimes n},A)\otimes R\to Hom(A^{\otimes n},A)$ compatible with the compositions.\ This is why we only use the compatibilities (\ref{eq:c1}) and (\ref{eq:c2}).
\end{rem}

The constancy condition (\ref{eq:constant}) cannot be dropped: the conditions $(\Id\otimes\text{ev}_0)\circ\tau=\rho_f$ and $(\Id\otimes\text{ev}_1)\circ\tau=\rho_g$ only say that $\tau(\T^{b^{\times n}}_b)$ and $\tau(\T^{r^{\times n}}_r)$ are equal to $m^n_A$ and $m^n_B$ for $t=0$ and $t=1$; in particular they do not see the components in $dt$.\ 

\begin{exmp}[$f\asymp g\not\Rightarrow f\approx g$]\label{ex:MSS}
Let $\mathbb{K}$ be a commutative ring.
\begin{itemize}
\item[$\bullet$] Let $A:=\mathbb{K}\,a$, with $\deg(a)=0$, endowed with the trivial $\Ain$-structure, i.e. $m^n_A=0$ for every $n\ge 1$.
\item[$\bullet$] Let $V:=\mathbb{K}\,e\oplus\mathbb{K}\,u$, with $\deg(e)=0$ and $\deg(u)=-1$, and $B:=\mathbb{K}\,1\oplus V$ such that $1$ is the unit\footnote{Note that since $B$ is strictly unital this is an Example of $f\asymp g\not\Rightarrow f\approx g$ and so $f\asymp g\not\Rightarrow f\sim g$.\ On the other hand, $B:=V_+$, i.e. the augmentation of the $\Ain$-algebra $V$.\ One can provide a simplified version of Example \ref{ex:MSS} taking $V$ instead of $B$ to prove $f\asymp g\not\Rightarrow f\sim g$.} and $V\cdot V=0$, with zero differential, regarded as a strictly unital $\Ain$-algebra as in Definition \ref{defn:DG}: $m^1_B=0$, $m^n_B=0$ for $n\ge3$ and $m^2_B(b_2,b_1)=(-1)^{\deg(b_1)}b_2\cdot b_1$, i.e.
\begin{align*}
m^2_B(x,1)=x,\qquad m^2_B(1,y)=(-1)^{\deg(y)}y,\qquad m^2_B(x,y)=0\ \text{ for } x,y\in V .
\end{align*}
\item[$\bullet$] Let $f:=(f^1,0,0,...)$ and $g:=(f^1,g^2,0,0,...)$, where 
\begin{align*}
f^1(a):=e,\qquad g^2(a, a):=-u.
\end{align*}
\end{itemize}
Then $f$ and $g$ are $\Ain$-morphisms, $f\not\approx g$, and $f\asymp g$.
\end{exmp}
\begin{proof}
First, we prove that $f$ and $g$ are $\Ain$-morphisms:\ Since $m_A=0$, $m^1_B=0$ and $m^n_B=0$ for $n\ge3$, the condition $(\star_n)$ reduces to $\sum_{s_1+s_2=n}m^2_B\big(F^{s_2}(...),F^{s_1}(...)\big)=0$; for $F=f$ and $F=g$ all the values of the $F^s$ lie in $V$, and $m^2_B$ vanishes on $V\otimes V$.\\
\emph{$f\not\approx g$.} Let $H\colon f\to g$ be a natural transformation of degree $0$.\ For degree reasons $H^0=\lambda1+\mu e$ for some $\lambda,\mu\in\mathbb{K}$, $H^1(a)\in B^{-1}=\mathbb{K}\,u$ and $H^n=0$ for $n\ge2$.\ Since $m_A=0$, $m^1_B=0$, $m^n_B=0$ for $n\ge3$ and $f^2=0$, the formula for $\mathfrak{M}^1$ gives
\begin{align*}
\mathfrak{M}^1(H)^2(a,a)&=m^2_B\big(g^1(a),H^1(a)\big)-m^2_B\big(H^1(a),f^1(a)\big)+m^2_B\big(g^2(a,a),H^0\big)+m^2_B\big(H^0,f^2(a,a)\big)\\
&=0-0+m^2_B(-u,\lambda1+\mu e)+0=-\lambda u,
\end{align*}
since $m^2_B$ vanishes on $V\otimes V$.\ Hence $\mathfrak{M}^1(H)=0$ implies $\lambda=0$, i.e.\ $H^0=\mu e$.\ 
Now suppose that $f\approx g$, and let $H$, $H'$, $W$, $W'$ be as in (\ref{appross}).\ The component of degree $0$ of the first equation in (\ref{appross}) gives $\pm\,m^2_B(H'^0,H^0)=1+m^1_B(W^0)=1$, which is impossible, since $H^0=\mu e$ and $m^2_B(B^0,e)\subseteq\mathbb{K}\,e$.\ Hence $f\not\approx g$.\ 
Since $B$ is strictly unital, $f\sim g$ would imply $f\approx g$, so $f\not\sim g$.\\
\emph{$f\asymp g$.} 
Let $\psi\colon B\otimes B\to B$ be the map of degree $-1$ (i.e.\ of shifted degree $0$) defined on the basis by
\begin{align*}
\psi(e,e):=u,\qquad \psi(x,y):=0\ \text{ for all the other basis elements } x,y\in\{1,e,u\}.
\end{align*}
We first check by hand that, for all $x_1,x_2,x_3\in B$,
\begin{equation}\label{eq:C}
\begin{split}
C(x_3,x_2,x_1):=\ &m^2_B\big(x_3,\psi(x_2,x_1)\big)+m^2_B\big(\psi(x_3,x_2),x_1\big)\\
&-\psi\big(x_3,m^2_B(x_2,x_1)\big)-(-1)^{\deg'(x_1)}\psi\big(m^2_B(x_3,x_2),x_1\big)=0 .
\end{split}
\end{equation}
It is enough to check it on the basis elements.\ Since $\psi$ vanishes as soon as one of its arguments is $1$ or $u$, and $m^2_B$ vanishes on $V\otimes V$, every term of $C$ vanishes on the triples of basis elements, except on $(1,e,e)$, $(e,1,e)$ and $(e,e,1)$, where all the arguments have degree $0$, i.e.\ $\deg'(x_1)=-1$.\ There we get
\begin{align*}
C(1,e,e)&=m^2_B(1,u)+0-0+\psi(e,e)=-u+u=0,\\
C(e,1,e)&=0+0-\psi(e,e)+\psi(e,e)=0,\\
C(e,e,1)&=0+m^2_B(u,1)-\psi(e,e)+0=u-u=0.
\end{align*}
Let $\varphi:=\psi\circ(f^1\otimes f^1)\colon A\otimes A\to B$, i.e.\ $\varphi(a,a)=u$; it has shifted degree $0$, and $g^2=-\varphi$.\\
Since $A^2_{\infty}$ is free as a graded 2-colored operad on the corollas $\T^{b^{\times n}}_b$, $\T^{r^{\times n}}_r$ ($n\ge 2$) and $\T^{b^{\times n}}_r$ ($n\ge 1$), we can define a morphism of graded 2-colored operads $\tau\colon A^2_{\infty}\to Hom(A,B)\otimes R$ by
\begin{align*}
&\tau(\T^{b^{\times n}}_b):=m^n_A\otimes 1=0 \quad (n\ge 2),\\
&\tau(\T^{r,r}_r):=m^2_B\otimes 1+\psi\otimes dt,\qquad \tau(\T^{r^{\times n}}_r):=0\quad (n\ge 3),\\
&\tau(\T^{b}_r):=f^1\otimes 1,\qquad \tau(\T^{b,b}_r):=-\varphi\otimes t,\qquad \tau(\T^{b^{\times n}}_r):=0\quad (n\ge 3).
\end{align*}
It remains to check that $\tau$ commutes with the differentials, and it is enough to check it on the generators.\ Note that $\tau$ preserves the degrees: $\psi\otimes dt$ and $m^2_B\otimes1$ have degree $1$, and $\varphi\otimes t$ has degree $0$.\ Since $m^1_A=0$ and $m^1_B=0$, the differential of $Hom(A,B)\otimes R$ is $d(\phi\otimes r)=(-1)^{\deg'(\phi)}\phi\otimes d(r)$.
\begin{itemize}
\item[(a)] \emph{Blue corollas.} $d_{A^2_\infty}(\T^{b^{\times n}}_b)$ is a sum of compositions of blue corollas, on which $\tau$ vanishes; and $d\big(\tau(\T^{b^{\times n}}_b)\big)=d(0)=0$.
\item[(b)] \emph{Red corollas.} We have $d_{A^2_\infty}(\T^{r,r}_r)=0$ and $d\big(m^2_B\otimes 1+\psi\otimes dt\big)=0$, since $d_R(1)=d_R(dt)=0$.\\ 
For $n=3$, by (\ref{eq:dblue}), $d_{A^2_\infty}(\T^{r^{\times 3}}_r)=-\T^{r,r}_r\circ_1\T^{r,r}_r-\T^{r,r}_r\circ_2\T^{r,r}_r$.\ 
By the composition rule of $Hom(B,B)\otimes R$, since $\deg'(m^2_B)=1$, $\deg'(\psi)=0$ and $dt\,dt=0$, for $k=1,2$ we have
\begin{align*}
\tau(\T^{r,r}_r)\circ_k\tau(\T^{r,r}_r)=\big(m^2_B\circ_k m^2_B\big)\otimes 1+\big(m^2_B\circ_k\psi-\psi\circ_k m^2_B\big)\otimes dt.
\end{align*}
Now $\sum_{k=1}^2 m^2_B\circ_k m^2_B=0$ is the equation (\ref{eq:Ainf-cat}) for $n=3$, and, by the formula for $\circ_k$ in $Hom(B,B)$,
\begin{align*}
\sum_{k=1}^2\big(m^2_B\circ_k\psi-\psi\circ_k m^2_B\big)(x_3,x_2,x_1)=C(x_3,x_2,x_1)=0
\end{align*}
by (\ref{eq:C}).\ So $\tau\big(d_{A^2_\infty}(\T^{r^{\times 3}}_r)\big)=0=d\big(\tau(\T^{r^{\times 3}}_r)\big)$.\\ 
For $n\ge 4$, every term of $d_{A^2_\infty}(\T^{r^{\times n}}_r)$ is a composition $\T^{r^{\times(n-j+1)}}_r\circ_k\T^{r^{\times j}}_r$ with $(n-j+1)+j=n+1\ge 5$, hence one of the two corollas has at least $3$ leaves, and $\tau$ vanishes on it.
\item[(c)] \emph{Mixed corollas.} $d_{A^2_\infty}(\T^{b}_r)=0$ and $d(f^1\otimes 1)=0$.\\ 
For $n=2$, by (\ref{eq:dmixed}), $d_{A^2_\infty}(\T^{b,b}_r)=\T^{b}_r\circ\T^{b,b}_b-\T^{r,r}_r\circ(\T^{b}_r,\T^{b}_r)$.\ Applying $\tau$, the first term vanishes, and, since $f^1$ has shifted degree $0$,
\begin{align*}
\big(m^2_B\otimes 1+\psi\otimes dt\big)\circ\big(f^1\otimes 1,f^1\otimes 1\big)=m^2_B\circ(f^1\otimes f^1)\otimes 1+\psi\circ(f^1\otimes f^1)\otimes dt=\varphi\otimes dt,
\end{align*}
because $m^2_B(e,e)=0$; hence $\tau\big(d_{A^2_\infty}(\T^{b,b}_r)\big)=-\varphi\otimes dt$.\ On the other hand $d\big(\tau(\T^{b,b}_r)\big)=-d(\varphi\otimes t)=-(-1)^{\deg'(\varphi)}\varphi\otimes dt=-\varphi\otimes dt$.\\ 
For $n\ge 3$ we have $\tau(\T^{b^{\times n}}_r)=0$, hence $d\big(\tau(\T^{b^{\times n}}_r)\big)=0$, and we have to show that $\tau\big(d_{A^2_\infty}(\T^{b^{\times n}}_r)\big)=0$.\ The differential $d_{A^2_\infty}(\T^{b^{\times n}}_r)$ is given by 
\begin{equation*}
\begin{split}
d_{A^2_\infty}\Big(\corollagen{n}{opblue}{opred}\Big)&=\sum_{j=2}^{n}\ \sum_{k=1}^{n-j+1}\corollagen{n-j+1}{opblue}{opred}\circ_{k}\corollagen{j}{opblue}{opblue}-\sum_{p=2}^{n}\ \sum_{\substack{s_1+...+s_p=n\\ s_1,...,s_p\ge 1}}\corollagen{p}{opred}{opred}\circ\Big(\corollagen{s_p}{opblue}{opred},...,\corollagen{s_1}{opblue}{opred}\Big).
\end{split}
\end{equation*}
Since $\tau$ vanishes on the blue corollas, on $\T^{r^{\times p}}_r$ for $p\ge3$ and on $\T^{b^{\times s}}_r$ for $s\ge3$, the only terms on which $\tau$ does not vanish a priori are those with $p=2$ and $(s_2,s_1)\in\{(1,2),(2,1),(2,2)\}$.\ For $n\ge 5$, if $p=2$ then $s_1+s_2=n\ge 5$, hence $s_1\ge 3$ or $s_2\ge 3$, and all the terms vanish.\ It remains to consider two cases.
\begin{itemize}
\item[$\bullet$] For $n=3$ we get, using that $f^1$ and $\varphi$ have shifted degree $0$ (so that no Koszul signs appear),
\begin{align}\label{graunz}
\tau\big(d_{A^2_{\infty}}(\T^{b^{\times3}}_r)\big)&=-\tau(\T^{r,r}_r)\circ\big(\tau(\T^{b}_r),\tau(\T^{b,b}_r)\big)-\tau(\T^{r,r}_r)\circ\big(\tau(\T^{b,b}_r),\tau(\T^{b}_r)\big)\\
&=\big(m^2_B\circ(f^1,\varphi)+m^2_B\circ(\varphi,f^1)\big)\otimes t+\big(\psi\circ(f^1,\varphi)+\psi\circ(\varphi,f^1)\big)\otimes t\,dt .\nonumber
\end{align}
Its value on $(a,a,a)$ is $\big(m^2_B(e,u)+m^2_B(u,e)\big)\otimes t+\big(\psi(e,u)+\psi(u,e)\big)\otimes t\,dt=0$, since $m^2_B$ vanishes on $V\otimes V$ and $\psi$ vanishes as soon as one of its arguments is $u$.
\item[$\bullet$] For $n=4$ we get
\begin{align}\label{graunz4}
\tau\big(d_{A^2_{\infty}}(\T^{b^{\times4}}_r)\big)=-\tau(\T^{r,r}_r)\circ\big(\tau(\T^{b,b}_r),\tau(\T^{b,b}_r)\big)=-m^2_B\circ(\varphi,\varphi)\otimes t^2-\psi\circ(\varphi,\varphi)\otimes t^2\,dt,
\end{align}
whose value on $(a,a,a,a)$ is $-m^2_B(u,u)\otimes t^2-\psi(u,u)\otimes t^2\,dt=0$.
\end{itemize}
So $\tau\big(d_{A^2_\infty}(\T^{b^{\times n}}_r)\big)=0=d\big(\tau(\T^{b^{\times n}}_r)\big)$ for every $n\ge 3$.
\end{itemize}
Finally, since $\text{ev}_k(1)=1$, $\text{ev}_k(dt)=0$, $\text{ev}_0(t)=0$ and $\text{ev}_1(t)=1$, we have $(\Id\otimes\text{ev}_0)\circ\tau=\rho_f$ and $(\Id\otimes\text{ev}_1)\circ\tau=\rho_g$.\ On the blue and red corollas both give $m_A$ and $m_B$, and on the mixed ones they give $(f^1,0,0,...)=f$ and $(f^1,-\varphi,0,...)=g$.\ Hence $f\asymp g$.
\end{proof}

\subsection*{Acknowledgements}
I thank Alberto Canonaco and Paolo Stellari for the many helpful discussions.\ 
Part of this work was written while the author was visiting the Institut des Hautes Etudes Scientifiques (Paris) 
whose warm hospitality is gratefully acknowledged.

\end{document}